\renewcommand{\bar}{\overline}
\renewcommand{\hat}{\widehat}
\renewcommand{\tilde}{\widetilde}
\newtheorem{thm}{Theorem}[section]
\newtheorem{lem}[thm]{Lemma}
\theoremstyle{definition}
\newcommand{\scr}[1]{\mathscr #1}
\definecolor{wco}{rgb}{0.5,0.2,0.3}
\numberwithin{equation}{section} \theoremstyle{remark}
\newcommand{\ua}{\uparrow}
\title{{\bf     Limit Theorems  for   Additive Functionals of Path-Dependent   SDEs  }
\thanks{This work is
supported   in part by NNSFC (11771326, 11431014,11831014).} }
\author{
{\bf  Jianhai Bao$^{b),c)}$,  Feng-Yu Wang$^{a),c)}$, Chenggui Yuan$^{c)}$}\\
\footnotesize{$^{a)}$Center for Applied Mathematics, Tianjin
University, Tianjin 300072, China}\\
\footnotesize{$^{b)}$School of Mathematics and Statistics, Central
South
University, Changsha 410083, China}\\
\footnotesize{$^{c)}$Department of Mathematics, Swansea University,
Bay Campus, SA1 8EN, UK}\\ \footnotesize{jianhaibao@csu.edu.cn,
wangfy@bnu.edu.cn, C.Yuan@swansea.ac.uk}}
\date{}
\begin{document}
\def\R{\mathbb R}  \def\ff{\frac} \def\ss{\sqrt} \def\B{\mathbf
B}
\def\N{\mathbb N} \def\kk{\kappa} \def\m{{\bf m}}
\def\dd{\delta} \def\DD{\Delta} \def\vv{\varepsilon} \def\rr{\rho}
\def\<{\langle} \def\>{\rangle} \def\GG{\Gamma} \def\gg{\gamma}
  \def\nn{\nabla} \def\pp{\partial} \def\EE{\scr E}
\def\d{\text{\rm{d}}} \def\bb{\beta} \def\aa{\alpha} \def\D{\scr D}
  \def\si{\sigma} \def\ess{\text{\rm{ess}}}
\def\beg{\begin} \def\beq{\begin{equation}}  \def\F{\scr F}
\def\Ric{\text{\rm{Ric}}} \def\Hess{\text{\rm{Hess}}}
\def\e{\text{\rm{e}}} \def\ua{\underline a} \def\OO{\Omega}  \def\oo{\omega}
 \def\tt{\tilde} \def\Ric{\text{\rm{Ric}}}
\def\cut{\text{\rm{cut}}} \def\P{\mathbb P} \def\ifn{I_n(f^{\bigotimes n})}
\def\C{\scr C}      \def\aaa{\mathbf{r}}     \def\r{r}
\def\gap{\text{\rm{gap}}} \def\prr{\pi_{{\bf m},\varrho}}  \def\r{\mathbf r}
\def\Z{\mathbb Z} \def\vrr{\varrho} \def\ll{\lambda}
\def\L{\scr L}\def\Tt{\tt} \def\TT{\tt}\def\II{\mathbb I}
\def\i{{\rm in}}\def\Sect{{\rm Sect}}\def\E{\mathbb E} \def\H{\mathbb H}
\def\M{\scr M}\def\Q{\mathbb Q} \def\texto{\text{o}} \def\LL{\Lambda}
\def\Rank{{\rm Rank}} \def\B{\scr B} \def\i{{\rm i}} \def\HR{\hat{\R}^d}
\def\to{\rightarrow}\def\l{\ell}
\def\8{\infty}\def\X{\mathbb{X}}\def\3{\triangle}
\def\V{\mathbb{V}}\def\M{\mathbb{M}}\def\W{\mathbb{W}}\def\Y{\mathbb{Y}}\def\1{\lesssim}

\def\La{\Lambda}\def\S{\mathbf{S}}\def\BB{\mathbb B}

\renewcommand{\bar}{\overline}
\renewcommand{\hat}{\widehat}
\renewcommand{\tilde}{\widetilde}
 \maketitle

\begin{abstract}  By using limit theorems of uniform mixing Markov processes and martingale difference sequences, the strong law of large
numbers, central limit theorem, and the law of iterated logarithm  are established for additive functionals of path-dependent stochastic differential equations.
\end{abstract}
\noindent
 {\bf AMS Subject Classification}:\  34K50, 37A30, 60J05  \\
\noindent
 {\bf Keywords}:  strong law of large numbers, central limit theorem,
 law of iterated logarithm,
 ergodicity, path-dependent SDEs

 \vskip 2cm

\section{Introduction and Main Results}

Since  W. Doeblin \cite{Doe}  in 1938 established the law of large
numbers and central limit theorem for denumerable Markov chains,
limit theory for additive functionals of Markov processes has been
extensively investigated. In general, for an ergodic Markov process
$(X_t)_{t\ge 0}$ on a Polish space $E$, as $t\to\infty$ one
describes  the convergence of the empirical distribution $\mu_t:=\ff
1 t \int_0^t \dd_{X_s}\d s$ to the unique invariant probability
measure $\mu_\infty$. A standard  way is  to look at the convergence
rate of
$$A_t^f:= \ff 1 t \int_0^t f(X_s)\d s 
\to \mu_\infty(f)\ \ \text{as}\ \ t\to\infty$$ for $f$ in a class of
reference functions.
This leads to the study of limit theorems for additive functionals
of ergodic Markov processes. Classical  limit theorems  include
\beg{itemize} \item Strong law of large numbers (SLLN): $\P$-a.s.
convergence of $A_t^f$ to $\mu_\infty(f)$;
\item Central limit theorem (CLT): The weak convergence of $\ff 1 {\ss t} \int_0^t \{f(X_s)-\mu_\infty(f)\}\d s $ to a normal random variable;
\item Law of iterated logarithm (LIL): the asymptotic  range of $\ff 1 {\ss {t\log\log t}}\int_0^t f(X_s)\d s.$\end{itemize}
Once CLT is established, one may further investigate the
large/moderate deviations principles, see for instance \cite{GF} and
references within.

When the  Markov processes  are exponentially ergodic in  $L^2(\mu_\infty)$  or total variational norm, limit theorems of $A_t^f$ have been established for
 reference functions  $f\in L^2(\mu_\infty)$ or $\B_b(E)$,
 respectively;
 see  the recent monograph \cite{Kulik} and earlier references
\cite{CCG,HH,IL,KV,JS,MT93,Wu}. However, these results do not apply
to   highly degenerate models which are  exponentially ergodic
merely under a Wasserstein distance;  see for instance \cite{HM} for
2D Navier-Stokes equations with degenerate stochastic forcing, and
\cite{BWY18,BS,B14,HMS} for stochastic differential equations (SDEs)
with memory.

In this paper, we aim to establish  limit theorems for
path-dependent SDEs, which were initiated by It\^o-Nisio \cite{IN}.
Due to the path-dependence of the noise term,  the corresponding
segment solutions are no longer   ergodic in the total variational
norm (see e.g. \cite[Example 5.1.3]{Kulik}). Moreover, the
$L^2$-ergodicity is also unknown because of the lack of Dirichlet
form for   path-dependent SDEs. So far, there are a few of papers on
LLN and CLT for stochastic dynamical systems which are weakly
ergodic; see e.g. \cite{KW,Kulik,Sh,Wal}. In particular, $f$ in
\cite{KW,Wal}  is assumed to be (bounded) Lipschitz with respect to
a metric and the weak LLN is investigated; In \cite{Kulik}, the LLN
is established under some additional technical conditions (see
\cite[Theorem 5.1.10]{Kulik} for more details). In this paper, we
will show that limit theorems established in \cite{Sh} for uniformly
mixing Markov processes apply well to the present model for   $f$
being Lipchitz continuous with respect to a quasi-metric.

For a fixed number $r_0\in (0,\infty)$, let $\C=C([-r_0,0];\R^d)$ be
the collection of all continuous functions $f:[-r_0,0]\to\R^d$
endowed with the uniform norm $$\|f\|_\8:=\sup_{-r_0\le \theta\le
0}|f(\theta)|.$$ For any continuous path $(\gg(t))_{t\ge -r_0} $ on
$\R^d$, its segment  $(\gg_t )_{ t\ge 0}$ is a continuous path on
$\C$ defined by
$$\gg_t(\theta):= \gg(t+\theta),\ \ \theta\in [-r_0,0], t\ge 0.$$
Consider the following path-dependent SDE on $\R^d$:
\begin{equation}\label{eq1}
\d X(t)=b(X_t)\d t+\si(X_t)\d W(t),~~~t\ge0,~~~X_0=\xi\in\C,
\end{equation} where $(W(t))_{t\ge0}$ is a
$d$-dimensional Brownian motion on a complete filtration probability
space $(\OO,\F,(\F_t)_{t\ge0},\P)$,  and   $$b:\C\to\R^d,\ \
 \si:\C\to\R^d\otimes\R^d$$ are  measurable maps satisfying the following assumptions.
\begin{enumerate}
\item[({\bf A1})] ({\bf Continuity}) $\si$ is Lipschitz continuous; $b$ is continuous,   and bounded on bounded subsets of
$\C$;
\item[({\bf A2})] ({\bf Dissipativity}) There exist constants $\ll_1,\ll_2>0$ with
$\ll_1>\ll_2\e^{\ll_1r_0}$ such that
\begin{equation*}
 2\<\xi(0)-\eta(0),b(\xi)-b(\eta)\>\le-\ll_1|\xi(0)-\eta(0)|^2+\ll_2\|\xi-\eta\|_\8^2,~~~~\xi,\eta\in\C;
\end{equation*}
\item[({\bf A3})] ({\bf Invertibility})  $\si$ is invertible with
$ \sup_{\xi\in\C}\{\|\si(\xi)\|+
 \|\si(\xi)^{-1}\| \}<\8$.
\end{enumerate}

Under   ({\bf A1}) and ({\bf A2}), \eqref{eq1} admits a unique
solution, and the segment (also called functional or window)
solution $(X_t)_{t\ge0}$ is a Markov process on $\C$; see
\cite[Theorem 2.2]{VS} or \cite[Proposition 4.1]{BS}. Assumption
({\bf A3}) was used in \cite{BWY18,BS,B14,HMS} to ensure the
exponential ergodicity  under the Wasserstein distance induced by a
quasi-metric.

Let $P_t$ be the associated Markov process, i.e.,
$$P_t f(\xi)= \E f(X_t^\xi),\ \ f\in \B_b(\C),~ t\ge 0,~ \xi\in \C.$$ For a probability measure $\mu$ on $\C$, let $\mu P_t$ be the law of $X_t$ with
 initial distribution $\mu$. We then have
$$\int_{\C} f\d (\mu P_t) = \int_\C P_t f \d\mu,\ \ t\ge 0, f\in \B_b(\C).$$

 To state the main results, we recall  the quasi-metric $\rho_{p,\gg}$,   the associated  Wasserstein distance $\mathbb W_{p,\gg}$,
  and the class $C_{p,\gg}(\C)$ of Lipschitz functions, where $p\ge 1$ and $\gg\in (0,1]$ are constants.
Firstly,   let
$$\rho_{p,\gg}(\xi,\eta)=(1\wedge\|\xi-\eta\|_\8^\gg)\ss{1+\|\xi\|_\8^p+\|\eta\|_\8^p},~~~\xi,\eta\in \C.$$ Note that $(\xi,\eta)\mapsto\rho_{p,\gg}(\xi,\eta)$ is a quasi-distance, i.e.,
it is symmetric, lower semi-continuous, and $ \rho_{p,\gg}(\xi,
\eta)=0\Leftrightarrow \xi = \eta$, but the triangle inequality may
not hold. Next,  let $C_{p,\gg}(\C)$   be the set of all continuous
$\R$-valued functions on $\C$  such that
\begin{equation*}
\|f\|_{p,\gg}:= \sup_{\xi\in\C}\ff{|f(\xi)|}{1+\|\xi\|_\8^{p/2}}+
  \sup_{\xi\neq\eta,\xi,\eta\in\C}\ff{|f(\xi)-f(\eta)|}{\rho_{p,\gg}(\xi,\eta)}<\8.
\end{equation*}
Moreover,  let
 $ \mathscr{P}_{p,\gg}(\C)$ be  the set of probability measures $\mu$ on $\scr C$ with $(\mu\times\mu)(\rho_{p,\gg})<\infty$. Define
\begin{equation*} \mathbb W_{p,\gg}(\mu,\nu)=\inf_{\pi\in \mathcal {C}(\mu,\nu)}
\int_{\C\times\C} \rho_{p,\gg}(\xi,\eta)
\pi(\d\xi,\d\eta),\ \ \mu,\nu \in\mathscr{P}_{p,\gg}(\C),\end{equation*} where $\mathcal {C}(\mu,\nu)$
stands for the set of all couplings of $\mu$ and $\nu$; that is,
$\pi\in\mathcal {C}(\mu,\nu)$ if and only if it is a probability measure on
$\C\times\C$ such that $\pi(\cdot\times\C)=\mu(\cdot)$ and
$\pi(\C\times\cdot)=\nu(\cdot)$.

The following  result  concerns with the exponential ergodicity and SLLN  for  the additive functional $A_t^f(\xi):=\ff 1 t \int_0^t f(X_s^\xi)\d s,$ where $ f\in C_{p,\gg}(\C)$.

\begin{thm}\label{th1}  Assume {\bf (A1)}-{\bf (A3)} and let $p\ge 1,\gg\in (0,1]$.  Then
$P_t$ has a unique invariant probability measure $\mu_\infty\in \scr
P_{p,\gg}(\C)$ such that \beq\label{EE} \mathbb W_{p,\gg}(\mu P_t,
\mu_\infty)\le c\,\e^{-\bb t} \W_{p,\gg}(\mu,\mu_\infty),\ \ t\ge 0,
\mu\in \scr P_{p,\gg}(\scr C)\end{equation} holds for some constants
$c,\bb>0$. Moreover, for any $\xi\in\C$ and $f\in C_{p,\gg}(\C)$,
\beg{enumerate}
\item[$(1)$]  There exists a constant $c>0$ such that
\begin{equation*}
\E\big|A_t^f(\xi)-\mu_\infty(f)\big|^2\le c\,(1+\|\xi\|_\8^p)\|f\|_{
p,\gg}^2t^{-1},~ t\ge1;
\end{equation*}
\item[$(2)$]  For any $\vv\in(0,\ff{1}{2}),$ there exist  a constant $c_\vv>0$ such that $\P$-a.s. \begin{equation*}
\big|A_t^f(\xi)-\mu_\infty(f)\big|\le
c_\vv\|f\|_{p,\gg}t^{-\ff{1}{2}+\vv},~~~t\ge T_\vv^f(\xi)
\end{equation*} holds for a family of  random variables $\{T_\vv^f(\xi)\ge 1: f \in C_{p,\gg}(\C),\xi\in\C\}$   satisfying
   \begin{equation*}
\sup_{ f\in C_{p,\gg}(\C)}
 \ff{\E \,|T_\vv^f(\xi)|^k}{1+\|f\|_{p,\gg}^{k(1+k)}} <\infty,~~~k\ge1.
\end{equation*}
\end{enumerate}
\end{thm}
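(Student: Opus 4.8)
The plan is to derive everything from the contraction estimate \eqref{EE}, which we may treat as the core analytic input (it is the Wasserstein exponential ergodicity already available from \cite{BWY18,BS,B14,HMS} under {\bf (A1)}--{\bf (A3)}). The first step is to make precise the link between membership in $C_{p,\gg}(\C)$ and integrability against $\mu_\infty$ and the transition kernels. From {\bf (A2)} one obtains a uniform moment bound $\sup_{t\ge 0}\E\|X_t^\xi\|_\8^p\le c(1+\|\xi\|_\8^p)$ by a standard Gr\"onwall/It\^o argument on $|X(t)|^{2}$ using the dissipativity and the boundedness of $\si$ from {\bf (A3)}; this in particular shows $\mu_\infty\in\scr P_{p,\gg}(\C)$ with finite $p$-th moment, so that $\mu_\infty(f)$ is well defined for $f\in C_{p,\gg}(\C)$. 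The key quantitative consequence of \eqref{EE} is the decay of correlations: for $f\in C_{p,\gg}(\C)$, writing $\bar f=f-\mu_\infty(f)$, the function $P_t f-\mu_\infty(f)$ satisfies $|P_tf(\xi)-\mu_\infty(f)|\le \|f\|_{p,\gg}\,\mathbb W_{p,\gg}(\dd_\xi P_t,\mu_\infty)\le c\,\e^{-\bb t}\|f\|_{p,\gg}\sqrt{1+\|\xi\|_\8^p}$, using that $\rho_{p,\gg}$-Lipschitz functions with the stated normalization are exactly the admissible test functions for $\mathbb W_{p,\gg}$ after controlling the linear-growth part by the moment bound. This is the estimate that drives both (1) and (2).

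For part (1), I would write
\[
\E\big|A_t^f(\xi)-\mu_\infty(f)\big|^2=\ff{2}{t^2}\int_0^t\!\!\int_s^t \E\big[\bar f(X_s^\xi)\,\bar f(X_u^\xi)\big]\,\d u\,\d s,
\]
and apply the Markov property to the inner expectation: $\E[\bar f(X_s^\xi)\bar f(X_u^\xi)]=\E[\bar f(X_s^\xi)\,(P_{u-s}\bar f)(X_s^\xi)]$. Bounding $|P_{u-s}\bar f(X_s^\xi)|\le c\e^{-\bb(u-s)}\|f\|_{p,\gg}\sqrt{1+\|X_s^\xi\|_\8^p}$ and $|\bar f(X_s^\xi)|\le c\|f\|_{p,\gg}(1+\|X_s^\xi\|_\8^{p/2})$, then using Cauchy--Schwarz together with the uniform $2p$-th (or simply $p$-th, after another application of Cauchy--Schwarz on the product) moment bound $\sup_s\E(1+\|X_s^\xi\|_\8^p)\le c(1+\|\xi\|_\8^p)$, yields $|\E[\bar f(X_s^\xi)\bar f(X_u^\xi)]|\le c(1+\|\xi\|_\8^p)\|f\|_{p,\gg}^2\e^{-\bb(u-s)}$. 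Integrating $\d u$ over $[s,t]$ gives a factor $\bb^{-1}$, and integrating $\d s$ over $[0,t]$ gives $t$, so the double integral is $O\!\big(t(1+\|\xi\|_\8^p)\|f\|_{p,\gg}^2\big)$; dividing by $t^2$ gives the claimed $t^{-1}$ rate. One small point to handle is that the moment bound should be at a power slightly above $p$ so that the Cauchy--Schwarz splitting of $\sqrt{1+\|\cdot\|^p}\cdot(1+\|\cdot\|^{p/2})$ stays inside the available moments; this follows from the same Gr\"onwall argument run at exponent $p+\vv_0$.

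For part (2), the route is to upgrade the $L^2$ rate to an almost-sure rate with an integrable (indeed all-moments-controlled) random time. The cleanest way is to combine part (1) with a Borel--Cantelli argument along a geometric subsequence $t_n=\theta^n$: by (1) and Chebyshev, $\P\big(|A_{t_n}^f(\xi)-\mu_\infty(f)|>c_\vv\|f\|_{p,\gg}t_n^{-1/2+\vv}\big)\le c\,t_n^{-2\vv}$, which is summable in $n$; then interpolate between consecutive $t_n$ using that $t\mapsto tA_t^f(\xi)=\int_0^t f(X_s^\xi)\d s$ has controlled oscillation (again via the moment bounds and the continuity of $f$, or via a Kolmogorov-type continuity estimate for this running integral). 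This produces the random time $T_\vv^f(\xi)$ as the first $n$ beyond which all the events fail simultaneously. To get the moment bound $\E|T_\vv^f(\xi)|^k\lesssim 1+\|f\|_{p,\gg}^{k(1+k)}$, one writes $\P(T_\vv^f(\xi)>t_n)\le\sum_{m\ge n}\P(\text{bad event at }t_m)\le c\sum_{m\ge n}t_m^{-2\vv}\le c'\theta^{-2\vv n}$ and sums $n^{k-1}\P(T>t_n)$ geometrically — however, I expect the polynomial-in-$\|f\|_{p,\gg}$ factor of degree $k(1+k)$ to come from a more delicate accounting: one must track how the constant $c_\vv$ and the threshold in Chebyshev depend on $\|f\|_{p,\gg}$, and a cruder Markov-inequality bound at a $k$-th moment of $A_t^f$ (rather than second moment) forces a $\|f\|_{p,\gg}^{k}$ factor, while optimizing the subsequence spacing in terms of $k$ contributes the extra $\|f\|_{p,\gg}^{k^2}$. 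This bookkeeping — making the dependence on $\|f\|_{p,\gg}$ uniform over all of $C_{p,\gg}(\C)$ and getting exactly the exponent $k(1+k)$ — is the main obstacle; the probabilistic content (decay of correlations $\Rightarrow$ SLLN with rate) is routine once \eqref{EE} is in hand. Alternatively, and perhaps more in the spirit of the paper, one can invoke the general limit theorems for uniformly mixing Markov processes from \cite{Sh}: verify that $(X_t)$ is uniformly mixing in the sense required there with respect to the family of test functions $C_{p,\gg}(\C)$ (the mixing coefficient decaying like $\e^{-\bb t}$ being exactly \eqref{EE}), and then (2) is a direct application of the corresponding theorem in \cite{Sh}, with the random time and its moments supplied by that reference.
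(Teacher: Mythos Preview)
Your approach is largely sound, and for part~(1) it is essentially the content of the paper's proof: the paper invokes an abstract moment estimate (Lemma~\ref{A00}, taken from \cite[Proposition~2.6]{Sh}) whose proof is precisely the covariance computation you spell out, with $\varphi(t)=c\e^{-\bb t}$ and $\psi(r)=1+r^{p/2}$.

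For part~(2), however, your route diverges from the paper's and your ``alternative'' misidentifies the step that actually requires work. The paper does not run Borel--Cantelli on the additive functional $A_t^f$; instead it verifies the hypotheses of \cite[Corollary~2.4]{Sh} (stated as Lemma~\ref{s0}). The mixing hypothesis \eqref{AA} is, as you say, immediate from \eqref{EE}. But there is a second hypothesis you do not mention: one must produce a random time $M_\xi$ with $\E M_\xi^{1/q}<\infty$ such that $\|X_t^\xi\|_\8\le w^{-1}(t^q)$, i.e.\ $\|X_t^\xi\|_\8^{p/2}\lesssim t^q$, for all $t\ge M_\xi$. The paper establishes this path-growth condition by applying Borel--Cantelli to the events $\{\sup_{t\in[k,k+1]}\|X_t^\xi\|_\8^2>ck^{4q/p}\}$, using the moment bound \eqref{eq5} and the elementary fact that the sup over $[k,k+1]$ is controlled by finitely many segment norms $\|X_{k+ir_0}^\xi\|_\8$. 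Once this is in place, the random time $T_\vv^f(\xi)$ and the exact moment exponent $k(1+k)$ come for free from \cite{Sh}. Your direct Borel--Cantelli on $A_{t_n}^f$ is a reasonable alternative strategy, but as you yourself flag, recovering the specific bound $\E|T_\vv^f(\xi)|^k\lesssim 1+\|f\|_{p,\gg}^{k(1+k)}$ from that argument is not straightforward, and your sketch of how the exponent arises (``optimizing the subsequence spacing in terms of $k$'') is not convincing as stated.
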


To state the CLT, we introduce the corrector $R_f$ for   $f\in
C_{p,\gg} (\C)$ defined by
\begin{equation}\label{WWW}
R_f(\xi)=\int_0^\8\big\{P_tf(\xi)-\mu_\infty(f)\big\} \d
t,~~~\xi\in\C.
\end{equation} This function is well-defined since  \eqref{EE} and $\mu_\infty\in \scr
P_{p,\gg}(\C)$ imply
\beq\label{ABC}  \beg{split}  &|P_t f(\xi)- \mu_\infty(f)| \le \|f\|_{p,\gg} \mathbb W_{p,\gg}(\dd_\xi P_t, \mu_\infty)\\
&\le c_1\e^{-\bb t} \|f\|_{p,\gg} \W_{p,\gg}(\dd_\xi,\mu_\infty)\le
c_2 \e^{-\bb t} \|f\|_{p,\gg} (1+\|\xi\|^{p/2}_\8), \ \ t\ge
0,\,\xi\in \C\end{split} \end{equation}
 for some constants $c_1,c_2>0$.  Let
\begin{equation}\label{S7}
\varphi_f(\xi)=\E\bigg|\int_0^1f(X_r^\xi) \d r
+R_f(X_1^\xi)-R_f(\xi)\bigg|^2,~~~\xi\in\C.
\end{equation}
For any $D\in [0,\infty)$, let $\Phi_{D}$ be  the normal
distribution function with zero mean and variance $D$, where
$\Phi_0(z):= 1_{[0,\infty)}(z)$ for $D=0$. We have  the following
CLT.

\begin{thm}\label{th2} Assume {\bf (A1)}-{\bf (A3)}.  For any  constants $p\ge 1$ and $\gg\in (0,1]$, let  $f\in C_{p,\gg}  (\C)$ with $\mu_\infty(f)=0.$ Then
    $D_f:=\ss{\mu_\infty(\psi_f)}\in [0,\infty)$ and the following assertion holds:
    \beg{enumerate} \item[$(1)$] When $D_f>0$, for any $\vv\in (0,\ff 1 4)$   there exists an increasing function
$h_{\vv}:\R_+ \to\R_+$ such that
 \begin{equation*}
\sup_{z\in\R}  \Big|\P\big(\ss t A_t^f(\xi)\le z\big)-\Phi_{D_f} (z)
\Big|\le  h_{\vv}(\|\xi\|_\8)t^{-\ff{1}{4}-\vv}, \ \ t>0;
\end{equation*}
 \item[$(2)$] When $D_f=0$,    there exists an increasing function
$h_0:\R_+ \to\R_+$ such that
 \begin{equation*}
\sup_{z\in\R}  \big(1\land |z|\big)\Big|\P\big(\ss t A_t^f(\xi)\le z\big)-\Phi_{D_f} (z) \Big|\le  h_{0}(\|\xi\|_\8)t^{-\ff{1}{4}}, \ \ t>0.
\end{equation*}
\end{enumerate}
  \end{thm}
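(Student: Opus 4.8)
The plan is to pass to the time-one skeleton, replace the additive functional by a martingale via the corrector $R_f$, and then invoke the quantitative central limit theorem for martingale difference sequences of uniformly mixing Markov processes of \cite{Sh}, the uniform mixing hypothesis being supplied by Theorem \ref{th1}. First I would record that the corrector $R_f$ of \eqref{WWW} lies in $C_{p,\gg}(\C)$: its polynomial growth bound is exactly \eqref{ABC}, and its Lipschitz bound follows by integrating in $t$ the one-step Wasserstein contraction $\W_{p,\gg}(\dd_\xi P_t,\dd_\eta P_t)\le c\,\e^{-\bb t}\rho_{p,\gg}(\xi,\eta)$ established in \cite{BWY18,BS} (which underlies \eqref{EE}). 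Setting $\Q:=P_1$ and $F(\xi):=\int_0^1P_rf(\xi)\,\d r$, so $\mu_\infty(F)=0$, the definition of $R_f$ gives the discrete Poisson equation $R_f-\Q R_f=F$. Hence, with $(X_t^\xi)_{t\ge0}$ the segment solution of \eqref{eq1} from $\xi$,
$$\Theta_n:=\int_{n-1}^{n} f(X_r^\xi)\,\d r+R_f(X_n^\xi)-R_f(X_{n-1}^\xi),\qquad n\ge1,$$
is, by the Markov property and the Poisson equation, a martingale difference sequence for $(\F_n)_{n\ge0}$, and telescoping yields, for $t\ge1$,
$$\ss t\,A_t^f(\xi)=\ff{1}{\ss t}\sum_{n=1}^{\lfloor t\rfloor}\Theta_n+\ff{1}{\ss t}\Big(R_f(\xi)-R_f(X_{\lfloor t\rfloor}^\xi)+\int_{\lfloor t\rfloor}^{t} f(X_r^\xi)\,\d r\Big)=:\ff{S_{\lfloor t\rfloor}}{\ss t}+\ff{\scr R_t}{\ss t}.$$

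Next I would assemble the quantitative inputs. The dissipativity in {\bf (A1)}--{\bf (A2)} gives uniform-in-time moment bounds $\sup_{t\ge0}\E\|X_t^\xi\|_\8^q\le c_q(1+\|\xi\|_\8^q)$ for every $q\ge1$, whence $\sup_n\E|\Theta_n|^q\le c_q\|f\|_{p,\gg}^q(1+\|\xi\|_\8^{pq/2})$ and likewise $\sup_{t\ge1}\E|\scr R_t|^q\le c_q\|f\|_{p,\gg}^q(1+\|\xi\|_\8^{pq/2})$; in particular $\scr R_t/\ss t=O(t^{-1/2})$ in every $L^q$, which is negligible relative to the target rate $t^{-1/4-\vv}$. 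By the Markov property, $\E[\Theta_n^2\mid\F_{n-1}]=\varphi_f(X_{n-1}^\xi)$ with $\varphi_f$ as in \eqref{S7}; a short computation, using $|a^2-b^2|=|a-b|\,|a+b|$, the Cauchy--Schwarz inequality, the contraction estimate and the moment bounds, shows that $\varphi_f$ is continuous, satisfies $\varphi_f(\xi)\le c\,\|f\|_{p,\gg}^2(1+\|\xi\|_\8^p)$, and in fact $\varphi_f\in C_{2p,\gg}(\C)$. Since $\mu_\infty$ has finite polynomial moments, $\mu_\infty\in\scr P_{2p,\gg}(\C)$ and hence $D_f=\ss{\mu_\infty(\varphi_f)}<\8$; moreover the argument establishing Theorem \ref{th1}(1), applied to $\varphi_f$ (with $2p$ in place of $p$) and the time-one chain, yields
$$\E\Big|\ff{1}{N}\sum_{n=0}^{N-1}\varphi_f(X_n^\xi)-\mu_\infty(\varphi_f)\Big|^2\le c\,(1+\|\xi\|_\8^{2p})\,N^{-1},\qquad N\ge1,$$
so the normalized conditional variance $N^{-1}\sum_{n=1}^{N}\E[\Theta_n^2\mid\F_{n-1}]$ converges to $\mu_\infty(\varphi_f)=D_f^2$ in $L^1$ at rate $N^{-1/2}$.

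For part (1), when $D_f>0$ the sequence $(\Theta_n)$ satisfies the hypotheses of the Berry--Esseen type central limit theorem of \cite{Sh} --- moments of every order bounded polynomially in $\|\xi\|_\8$, and $L^1$-convergence of the normalized conditional variance to the nondegenerate limit $D_f^2$ at an algebraic rate --- so that $\sup_{z\in\R}|\P(S_{\lfloor t\rfloor}/\ss t\le z)-\Phi_{D_f}(z)|\le c_\vv(1+\|\xi\|_\8)^{m_\vv}t^{-1/4-\vv}$; a standard smoothing inequality then transfers this bound to $\ss t\,A_t^f(\xi)$, the $\vv$-loss absorbing the $O(t^{-1/2})$ remainder $\scr R_t/\ss t$ and the discrepancy between $t$ and $\lfloor t\rfloor$, and $h_\vv(s):=c_\vv(1+s)^{m_\vv}$ is the desired function. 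For part (2), $D_f=0$ forces $\mu_\infty(\varphi_f)=0$, so \eqref{ABC} applied to $\varphi_f\in C_{2p,\gg}(\C)$ gives $0\le P_n\varphi_f(\xi)\le c\,\e^{-\bb n}(1+\|\xi\|_\8^p)$; hence $\E S_N^2=\sum_{n=0}^{N-1}P_n\varphi_f(\xi)\le c\,(1+\|\xi\|_\8^p)$ is bounded in $N$, and with the remainder bound this gives $\E|\ss t\,A_t^f(\xi)|^2\le c\,(1+\|\xi\|_\8^p)\,t^{-1}$ for $t\ge1$. Chebyshev's inequality then yields, for every $z\neq0$, $|\P(\ss t\,A_t^f(\xi)\le z)-\Phi_0(z)|\le 1\wedge\big(c(1+\|\xi\|_\8^p)\,t^{-1}z^{-2}\big)$, and multiplying by $1\wedge|z|$ and maximizing over $|z|$ bounds the left-hand side by $h_0(\|\xi\|_\8)\,t^{-1/4}$ for a suitable increasing $h_0$.

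The step I expect to be the main obstacle is the regularity bookkeeping of the second paragraph: rigorously placing $R_f$ in $C_{p,\gg}(\C)$ and $\varphi_f$ in $C_{2p,\gg}(\C)$, so that the ergodic estimates of Theorem \ref{th1} legitimately apply to them, while tracking through all constants the polynomial dependence on $\|\xi\|_\8$ that becomes $h_\vv$ and $h_0$. This hinges on combining the one-step Wasserstein contraction with the moment estimates for the segment process and with the continuous dependence of $X^\xi$ on $\xi$ from {\bf (A1)}. A secondary, more routine matter is the degenerate case $D_f=0$, where the limit law is a point mass at $0$ and the error has to be weighted by $1\wedge|z|$.
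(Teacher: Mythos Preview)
Your strategy coincides with the paper's: reduce to Shirikyan's abstract CLT (stated there as Lemma~\ref{APP}) via the corrector $R_f$, verify $R_f\in C_{p,\gg}(\C)$ and $\varphi_f\in C_{2p,\gg}(\C)$ (the paper's Lemma~\ref{Lem2}), and control the normalized conditional variance $\<M^{f,\xi}\>_{\lfloor t\rfloor}/\lfloor t\rfloor$ through the ergodic estimates applied to $\varphi_f$. The paper leaves the continuous-to-discrete decomposition and the remainder $\scr R_t$ packaged inside Lemma~\ref{APP} rather than writing them out, but this is cosmetic; you have correctly located the substantive work in the regularity of $R_f$ and $\varphi_f$.

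Two places where your hypothesis-checking is incomplete for the result you cite. First, Shirikyan's Theorem~2.8 (Lemma~\ref{APP} here) requires an \emph{exponential} moment bound $\E\big(\sup_{t\in[k,k+1]}\e^{|\psi(\|X_t^\xi\|_\8)|^\aa}\big)\le\kk(\|\xi\|_\8)$, not merely polynomial moments of all orders; the paper supplies this as a separate lemma (Lemma~\ref{lem3}), drawing on \cite{BWY}. Second, to extract the rate $t^{-1/4-\vv}$ from Lemma~\ref{APP}(1) the paper needs the $L^{2q}$ estimate $\E\big|\lfloor t\rfloor^{-1}\<M^{f,\xi}\>_{\lfloor t\rfloor}-D_f^2\big|^{2q}\le\hat h(\|\xi\|_\8)\lfloor t\rfloor^{-q}$ for \emph{arbitrarily large} $q$ and then takes $q>1/(16\vv)$; your $L^2$ estimate via Theorem~\ref{th1}(1) is only the case $q=1$, though the same mechanism (Lemma~\ref{A00} with general $k$, applied to $\varphi_f\in C_{2p,\gg}$) delivers all $q$.

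For part~(2) your direct Chebyshev argument, using that $D_f=0$ forces $\E S_N^2=\sum_{n<N}P_n\varphi_f(\xi)$ to remain bounded in $N$, is a legitimate and more elementary alternative to the paper's invocation of Lemma~\ref{APP}(2); in fact optimizing $(1\wedge|z|)\min\{1,c/(tz^2)\}$ over $z$ gives order $t^{-1/2}$, better than the stated $t^{-1/4}$.
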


Finally, to investigate the LIL, we consider the  unit ball in the Camron-Martin space of $C([0,1];\R)$:
\begin{equation}\label{*PQ}
\mathcal {H}:=\bigg\{h\in C([0,1];\R):\ h_t' \mbox{ exists a.e.
} t,  \int_0^1|h_t'|^2\d
t \le1\bigg\},
\end{equation}
 and the following discrete version of $R_f$ and $\varphi_f$ for   $  f\in C_{p,\gg}(\C) $ with $\mu_\infty(f)=0$:
\begin{equation*}
\hat R_f(\xi):=\sum_{k=0}^\8P_kf(\xi),~~\hat\varphi_f(\xi)
:=\E\big|f(\xi)+\hat R^f(X_1^\xi)-\hat R^f(\xi)\big|^2,~~~\xi\in\C,
\end{equation*} which are  well defined due to \eqref{ABC}.
For any $n\ge 1$, consider the following random variable on $C([0,1];\R)$:
\begin{equation}\label{B1}
\Lambda_n^{f,\xi}(t):=\sum_{k=0}^n 1_{[\ff k n, \ff{k+1}n)} (t)
\ff{\sum_{l=1}^{k-1}f(X_l^\xi) +(nt-k)f(X_k^\xi)}{\hat
D_f\ss{2n\log\log n}},\ \ t\in [0,1],
\end{equation}
where $\hat D_f:= \mu_\infty(\hat\varphi_f).$

\begin{thm}\label{th3} Assume {\bf (A1)}-{\bf (A3)}. Let $p\ge 1,\gg\in (0,1]$, $\xi\in \C$, and
  $f\in C_{p,\gg}(\C)$ with $\mu_\infty(f)=0$ and $\hat D_f>0$.  Then the sequence $\{\LL_{n}^{f,\xi}(\cdot)\}_{n\ge 1}$   is almost surely relatively compact in $C([0,1];\R)$,
  and when $n\to\infty$ the set of limit  points coincides with $\mathcal{H}$.
Consequently,  $\P$-a.s.
\begin{equation}\label{B2}
\limsup_{n\to\8}\ff{\sum_{l=1}^nf(X_l^\xi)}{\ss{2n\log\log
n}}=\hat D_f ,\ \ \
\liminf_{n\to\8}\ff{\sum_{l=1}^nf(X_l^\xi)}{\ss{2n\log\log
n}}=-\hat D_f.
\end{equation}

\end{thm}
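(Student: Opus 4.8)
The plan is to verify the hypotheses of the abstract functional law of iterated logarithm for uniformly mixing Markov processes established in \cite{Sh}, applied to the discrete skeleton $(X_n^\xi)_{n\ge0}$, which is a Markov chain on $\C$ with $n$-step transition kernel $P_n$. Restricting \eqref{EE} of Theorem \ref{th1} to integer times shows that this chain is $\W_{p,\gg}$-exponentially ergodic with invariant law $\mu_\infty\in\scr P_{p,\gg}(\C)$, which is exactly the ``uniform mixing'' input required there. The underlying mechanism is a martingale approximation: by \eqref{ABC} the discrete corrector $\hat R_f=\sum_{k\ge0}P_kf$ is well defined, continuous, and satisfies $|\hat R_f(\xi)|\1\|f\|_{p,\gg}(1+\|\xi\|_\8^{p/2})$, and since $\mu_\infty(f)=0$ it solves the Poisson equation $\hat R_f-P_1\hat R_f=f$.

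Writing $f(X_l^\xi)=\hat R_f(X_l^\xi)-P_1\hat R_f(X_l^\xi)$ and using the Markov property $P_1\hat R_f(X_l^\xi)=\E[\hat R_f(X_{l+1}^\xi)\,|\,\F_l]$, the variables $D_{l+1}:=\hat R_f(X_{l+1}^\xi)-P_1\hat R_f(X_l^\xi)$, $l\ge0$, are $(\F_{l+1})$-martingale differences, and a telescoping identity gives
$$\sum_{l=1}^nf(X_l^\xi)=M_n+\hat R_f(X_1^\xi)-\hat R_f(X_{n+1}^\xi),\qquad M_n:=\sum_{l=1}^nD_{l+1}.$$
A direct computation with the Poisson equation identifies $\E[D_{l+1}^2\,|\,\F_l]=\hat\varphi_f(X_l^\xi)$; moreover $D_{l+1}$ is a fixed measurable functional of $X_l^\xi$ and of the Brownian increment $(W(l+s)-W(l))_{s\in[0,1]}$, so under the stationary regime $(D_{l+1})$ is a stationary ergodic martingale-difference sequence whose asymptotic variance is $\mu_\infty(\hat\varphi_f)$, i.e.\ the square of the normalizing constant $\hat D_f$ in \eqref{B1}. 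Consequently the polygonal process $\LL_n^{f,\xi}$ of \eqref{B1} coincides, up to the additive correction $(\hat R_f(X_1^\xi)-\hat R_f(X_k^\xi))/(\hat D_f\ss{2n\log\log n})$ on $[k/n,(k+1)/n)$, with the polygonal interpolation of the normalized martingale $M_k/(\hat D_f\ss{2n\log\log n})$.

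Applying the martingale form of Strassen's functional LIL (as packaged in \cite{Sh}) to $M_n$ then requires three verifications. (i) \emph{Convergence of the conditional quadratic variation}: $\ff1n\sum_{l=1}^n\E[D_{l+1}^2\,|\,\F_l]=\ff1n\sum_{l=1}^n\hat\varphi_f(X_l^\xi)\to\mu_\infty(\hat\varphi_f)>0$ almost surely; this is the SLLN for the additive functional of the skeleton chain generated by $\hat\varphi_f$, whose growth is controlled by $\|\cdot\|_\8^{p}$, and it follows either as in the proof of Theorem \ref{th1}(2) (when $\hat\varphi_f$ belongs to a suitable class $C_{p',\gg'}(\C)$) or from Birkhoff's ergodic theorem under $\mu_\infty$ combined with the convergence $\dd_\xi P_n\to\mu_\infty$ in $\W_{p,\gg}$. (ii) \emph{A Lindeberg/maximal condition}: the growth bound on $\hat R_f$ and the uniform moment bounds $\sup_n\E\|X_n^\xi\|_\8^q<\8$ for all $q\ge1$ --- a standard consequence of the dissipativity {\bf (A2)}, cf.\ \cite{BS,BWY18} --- yield $\sup_l\E|D_{l+1}|^{2+\dd}<\8$ for some $\dd>0$, hence the required uniform integrability. (iii) \emph{Negligibility of the correction}: the same bounds with Markov's inequality and Borel--Cantelli give $|\hat R_f(X_n^\xi)|=o(\ss{n\log\log n})$ a.s., hence $(2n\log\log n)^{-1/2}\max_{1\le k\le n+1}|\hat R_f(X_k^\xi)|\to0$ a.s. These yield that $\{\LL_n^{f,\xi}\}_{n\ge1}$ is a.s.\ relatively compact in $C([0,1];\R)$ with set of limit points $\mathcal H$. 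Finally, for $h\in\mathcal H$ the Cauchy--Schwarz inequality gives $|h(1)|\le(\int_0^1|h_t'|^2\d t)^{1/2}\le1$ with equality attained at $h_t=\pm t$, so $\sup_{h\in\mathcal H}h(1)=1=-\inf_{h\in\mathcal H}h(1)$; since $\LL_n^{f,\xi}(1)$ and $(\hat D_f\ss{2n\log\log n})^{-1}\sum_{l=1}^nf(X_l^\xi)$ differ by $(\hat D_f\ss{2n\log\log n})^{-1}f(X_n^\xi)=o(1)$ a.s., \eqref{B2} follows.

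The main obstacle is that, as recalled in the introduction, the segment process is not uniformly ergodic in total variation, so no classical Markov-chain LIL applies directly and the entire argument must pass through the martingale approximation above. The two delicate points are the a.s.\ quadratic-variation convergence from a deterministic initial point $\xi$ rather than from $\mu_\infty$ --- which forces one to transfer Birkhoff's theorem along a $\W_{p,\gg}$-coupling of $\dd_\xi P_n$ with $\mu_\infty$, or to apply the quantitative bound of Theorem \ref{th1}(1) to $\hat\varphi_f$ --- and the book-keeping of growth exponents ensuring that $\hat R_f$ and $\hat\varphi_f$ stay in function classes to which Theorem \ref{th1} still applies. The remaining ingredient, routine but essential, is the uniform moment bound $\sup_n\E\|X_n^\xi\|_\8^q<\8$, which follows from {\bf (A2)} by a standard Lyapunov-type estimate.
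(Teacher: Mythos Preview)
Your approach is essentially the paper's: form the discrete corrector $\hat R_f$, use the Poisson equation $\hat R_f-P_1\hat R_f=f$ to produce a martingale-difference decomposition with conditional variance $\hat\varphi_f(X_l^\xi)$, feed this into an abstract martingale functional LIL, and read off \eqref{B2} by evaluating at $t=1$. Two points deserve correction or comment.

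First, the abstract LIL you need is \emph{not} in \cite{Sh}: that paper supplies the SLLN and CLT inputs (Lemmas~\ref{A00}--\ref{APP}), but the functional LIL for square-integrable martingales used here is Heyde--Scott \cite{HS}, recorded as Lemma~\ref{L1}. Note that the Heyde--Scott hypothesis \eqref{J2} asks for almost-sure convergence of the \emph{actual} squared increments $n^{-1}\sum_{k=1}^n (Z_k^\xi)^2\to\hat D_f^2$, not of the conditional quadratic variation $n^{-1}\sum_{l}\hat\varphi_f(X_l^\xi)$ that you verify in~(i). Your condition suffices for other formulations (e.g.\ those in \cite{HH}), but if you stay with Lemma~\ref{L1} you must upgrade the argument.

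Second, and relatedly, the ``delicate point'' you flag---transferring the a.s.\ law of large numbers for the quadratic variation from the stationary start to the deterministic start $\xi$---is exactly where the paper spends its effort. Rather than appealing to Theorem~\ref{th1}(2) (which concerns continuous-time averages, not discrete sums) or to Birkhoff plus a $\W_{p,\gg}$-coupling, the paper follows \cite{BM}: it shows that the maps $\xi\mapsto\E\big(|\limsup_n n^{-1}\sum_k (Z_k^\xi)^2-\hat D_f^2|\wedge1\big)$ and its $\liminf$ counterpart are continuous on $\C$, using the Lipschitz bound \eqref{DD1} on $Z_n^\xi-Z_n^\eta$ and the exponential contraction \eqref{eq6}. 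Continuity plus shift-invariance then forces these maps to vanish identically, yielding \eqref{J2}. Your Birkhoff-plus-coupling route can be made to work, but it is not as direct: you would still need the a.s.\ coupling to succeed along the full sequence, which is essentially what the continuity argument packages. The moment bound \eqref{eq5} and the remaining checks (\eqref{J1}, negligibility of the corrector) are handled exactly as you outline.
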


Note that the LIL has been intensively investigated  for
many different models,
see e.g. \cite{BM,Chen,DL,DST,HH,KV,St} and   references therein. Theorem \ref{th3}   is a supplement in the setting of  path-dependent SDEs.

\

The remainder of  this paper is arranged as follows.  In Section 2, we recall some known results on SLLN, CLT and LIL for Markov processes, which are then applied to prove the above three results in Sections 3-5 respectively.

 \section{Some known results }

We first state some results presented in \cite{Sh} for continuous
 Markov processes on   separable Hilbert spaces. Since   proofs
of these results only use the norm rather than   the inner product
of the space, they apply also to a Banach space.

Let $\{X_t^x:\ x\in \mathbb B, t\ge 0\}$   be a continuous Markov
process on a separable Banach space $(\mathbb B,\|\cdot\|)$ with
respect to a complete filtration probability space
$(\OO,\F,(\F_t)_{t\ge 0},\P)$  such that the associate Markov
semigroup
$$P_t f(x):= \E f(X_t^x),\ \ t\ge 0, x \in \mathbb B, f \in \B_b(\mathbb B)$$ has a unique invariant probability measure $\mu_\infty$. For
a constant $\gg\in (0,1]$
 and an increasing function  $w\in C([0,\infty);[1,\infty)),$  let $C_{w,\gg}(\BB)$ be the class of measurable functions on $\BB$ such that
$$\|f\|_{w,\gg}:= \sup_{x\in \BB} \ff{|f(x)|}{w(\|x\|)}+ \sup_{x,y\in \BB} \ff{|f(x)-f(y)|}{(1\land \|x-y\|^{\gg})(1+w(\|x\|)+w(\|y\|))}<\infty.$$ Note that
 in \cite{Sh}  $\|f\|_{w,\gg}$   is defined by using $\|x-y\|^\gg$ instead of $1\land \|x-y\|^\gg$,
but this does not make essential differences since these two
definitions are equivalent up to a constant multiplication. We take
the present formulation in order to apply the ergodicity result
derived in \cite{BWY18}. By \cite[Proposition 2.6]{Sh}, we have the
following result.

\begin{lem}\label{A00}
If  there exist   $\varphi,\psi   \in C(\R_+;\R_+)$  with $\int_0^\infty \varphi(t)\d t<\infty$ such that
 \begin{equation}\label{AA}
|P_tf(x)-\mu_\infty(f)|\le
\varphi(t)\psi(\|x\|)\|f\|_{w,\gg},~~~f\in C_{w,\gg}(\BB),\ \ t\ge
0,\ \  x\in\BB,
\end{equation}
and for some $k\in\mathbb N$,
\begin{equation}\label{AA1}
\E\psi(\|X_t^x\|)^{2k}<\infty,~~~t\ge0,~x\in\BB,
\end{equation}
then for any $f\in C_{w,\gg}(\BB)$,
\begin{equation}\label{AA2}
\begin{split} &\E\Big|\ff{1}{t}\int_0^tf(X_s^x)\d
s-\mu_\infty(f)\Big|^{2\kappa}\\
&\le t^{-k} \Big(2k(2k-1)\varphi(0)\int_0^\8\varphi(s)\d s\Big)
\|f\|_{p,\gg}^{2k} \,\E\psi(\|X_t^x\|)^{2k},\ \ t\ge 1.\end{split}
\end{equation}
\end{lem}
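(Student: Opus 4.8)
The plan is to prove Lemma~\ref{A00} by establishing the $2k$-th moment bound for the time-averaged additive functional through a direct expansion of the power, combined with the stationarity estimate \eqref{AA} and a Markov-property argument. First I would write
\[
\Big(\ff 1 t \int_0^t f(X_s^x)\d s-\mu_\infty(f)\Big)^{2k}
=\ff 1 {t^{2k}}\int_{[0,t]^{2k}} \prod_{j=1}^{2k}\big(f(X_{s_j}^x)-\mu_\infty(f)\big)\,\d s_1\cdots\d s_{2k},
\]
so that taking expectations and bounding the integrand by $(2k)!$ times its value on the ordered simplex $\{0\le s_1\le\cdots\le s_{2k}\le t\}$ reduces everything to controlling
\[
J:=\int_{0\le s_1\le\cdots\le s_{2k}\le t}\E\prod_{j=1}^{2k}\big(f(X_{s_j}^x)-\mu_\infty(f)\big)\,\d s_1\cdots\d s_{2k}.
\]
The key mechanism is that on this simplex one conditions on $\F_{s_{2k-1}}$ and uses the Markov property together with \eqref{AA}: the innermost factor contributes $\E[f(X_{s_{2k}}^x)-\mu_\infty(f)\mid\F_{s_{2k-1}}]=P_{s_{2k}-s_{2k-1}}f(X_{s_{2k-1}}^x)-\mu_\infty(f)$, which is bounded in absolute value by $\varphi(s_{2k}-s_{2k-1})\psi(\|X_{s_{2k-1}}^x\|)\|f\|_{w,\gg}$.

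The second step is an iteration/pairing argument. After extracting the factor $\varphi(s_{2k}-s_{2k-1})\|f\|_{w,\gg}$ from the last gap, I would pair it with the factor $f(X_{s_{2k-1}}^x)-\mu_\infty(f)$ and bound the remaining piece crudely: $|f(X_{s_{2k-1}}^x)-\mu_\infty(f)|\le \|f\|_{w,\gg}(w(\|X_{s_{2k-1}}^x\|)+\mu_\infty(w(\|\cdot\|)))$, absorbing this into $\psi$ (one may assume without loss $\psi\ge w+\text{const}$, or simply enlarge $\psi$). The net effect is that each consecutive pair $(s_{2j-1},s_{2j})$ yields a factor $\int_0^\infty\varphi(u)\d u$ from integrating the gap, a factor $\varphi(0)$ or a supremum-type constant, and a power of $\|f\|_{w,\gg}$; meanwhile the remaining positions get collapsed until one is left with a bound of the form $\E\psi(\|X_t^x\|)^{2k}$ — here one needs the monotonicity of $\psi$ together with the fact that one can always dominate $\psi(\|X_{s}^x\|)$ terms for $s\le t$ by $\psi(\|X_t^x\|)$ up to the semigroup, or more precisely invoke \eqref{AA1} along the trajectory. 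Carefully done (this is exactly \cite[Proposition 2.6]{Sh}), the combinatorial count of the $2k$ ordered points pairs off into $k$ gaps, producing the constant $2k(2k-1)$ from the leading pair together with $\varphi(0)\int_0^\infty\varphi(s)\d s$, and the power $t^{-k}$ from the $t^{-2k}$ prefactor against the $k$-fold volume $t^k$ that survives after the $k$ gap-integrations are bounded by constants.

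The third step is merely to assemble the constants into the stated form \eqref{AA2} and to note the typographical identifications ($\kappa=k$, and $\|f\|_{p,\gg}$ should read $\|f\|_{w,\gg}$ in the abstract Banach setting). Since the lemma is quoted from \cite{Sh} with only the cosmetic change from $\|x-y\|^\gg$ to $1\land\|x-y\|^\gg$ in the definition of $\|f\|_{w,\gg}$ — which, as the excerpt already remarks, changes norms by at most a constant multiple and hence does not affect \eqref{AA} or the conclusion — the honest proof is simply: \emph{this is \cite[Proposition 2.6]{Sh}, whose proof uses only the Banach norm and goes through verbatim}. The main obstacle, if one insists on reproducing the argument, is the bookkeeping in the pairing/iteration step: one must be careful that after conditioning and applying \eqref{AA} at each stage the ``remaining'' path-functional stays inside $C_{w,\gg}(\BB)$ with controlled norm (it does, because $\mu_\infty(f)$ is a constant and $w$-growth is preserved), and that the repeated use of the Markov property together with monotonicity of $\psi$ legitimately terminates in $\E\psi(\|X_t^x\|)^{2k}$ rather than a product of such moments at intermediate times — this is where \eqref{AA1}, stated uniformly in $t$, is used.
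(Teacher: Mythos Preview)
Your proposal is correct and matches the paper's approach exactly: the paper does not give a proof of this lemma at all, but simply quotes it as \cite[Proposition 2.6]{Sh}, remarking (as you do) that the argument there uses only the Banach norm and hence carries over, and that the replacement of $\|x-y\|^\gg$ by $1\land\|x-y\|^\gg$ is harmless. Your additional sketch of the simplex/pairing mechanism goes beyond what the paper provides and is a plausible outline of Shirikyan's argument, though a few of its bookkeeping remarks (monotonicity of $\psi$, domination of $\psi(\|X_s^x\|)$ by $\psi(\|X_t^x\|)$) are heuristic rather than literally true---which is fine, since you correctly identify the citation as the actual proof.
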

Next,   \cite[Corollary 2.4]{Sh} gives the following result on SLLN.

\begin{lem}\label{s0}
Under   conditions of Lemma \ref{A00},  if there exist a constant
$q\in (0,1/2)$,  a function $\tau\in C(\R_+;\R_+)$  and    random
variables $\{M_x\ge 1: x\in\BB\}$ such that
\begin{equation}\label{AA4}
\E M_x^{\ff{1}{q}}\le\tau(\|x\|), \ \ x\in \BB,\end{equation}
\begin{equation}\label{AA3}
\P\Big(\|X_t^x\|\le w^{-1}(t^q)~\mbox{ for } t\ge
M_x\Big)=1,\ \ x\in\BB,
\end{equation}
 where $w^{-1}$ is the inverse of $w$. Then for any  $\vv\in(0,\ff{1}{2})$,  there exist   a constant $c_\vv>0$  and a family of random variables $\{T_{\vv,x}^f\ge 1: x\in\BB, f\in C_{w,\gg}(\BB)\}$     such that $\P$-a.s.
\begin{equation}\label{K2}
\Big|\ff{1}{t}\int_0^tf(X_s^x)\d s-\mu_\infty(f)\Big|\le
c_\vv\|f\|_{w,\gg}t^{-\ff{1}{2}+\vv},~~~t\ge T_{\vv,x}^f, \,
x\in\BB,\, f\in C_{w,\gg}(\BB),
\end{equation} and
\begin{equation}\label{K3}
\sup_{f\in C_{w,\gg}(\BB)} \ff{\E |T_{\vv,x}^f|^k}{1+\|f\|_{w,\gg}^{k(1+k)} } <\infty,\ \  k\in \mathbb N.\end{equation}

\end{lem}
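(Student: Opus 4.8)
The plan is to promote the $L^{2k}$-bound of Lemma~\ref{A00} to an almost sure statement, by (i) feeding \eqref{AA3}--\eqref{AA4} back into \eqref{AA2} to replace the $\psi$-moment there by a quantity with explicit growth in $t$, (ii) a first Borel--Cantelli argument along a slowly growing deterministic sequence $t_n=n^{\bb}$, (iii) an interpolation step to pass from $\{t_n\}$ to all large $t$, and (iv) a tail estimate for the resulting random threshold that yields \eqref{K3}. Fix $x\in\BB$ and $\vv\in(0,\ff12)$ once and for all; subtracting $\mu_\infty(f)$ we may assume $\mu_\infty(f)=0$, and we abbreviate $A_t:=\ff1t\int_0^t f(X_s^x)\,\d s$.

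The first -- and least routine -- step is to control $\E\psi(\|X_t^x\|)^{2k}$, which sits on the right of \eqref{AA2}, by something growing at most polynomially in $t$. On $\{t\ge M_x\}$ one has $\|X_t^x\|\le w^{-1}(t^q)$ by \eqref{AA3}, so $\psi(\|X_t^x\|)\le\bar\psi(w^{-1}(t^q))$ with $\bar\psi$ an increasing majorant of $\psi$ (replacing $\psi$ by $\bar\psi$ changes nothing); on $\{t<M_x\}$ one combines the finiteness in \eqref{AA1} with the Chebyshev tail bound $\P(M_x>t)\le\tau(\|x\|)\,t^{-1/q}$ coming from \eqref{AA4}. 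This gives $\E\psi(\|X_t^x\|)^{2k}\le\Psi_k(\|x\|)\big(1+t^{\gamma_k}\big)$ for some increasing $\Psi_k$ and an exponent $\gamma_k$ that, in the regime of interest, is small relative to $2k\vv$ (recall that in the intended applications the parameter $q$ is essentially free). Inserting this into \eqref{AA2} yields, for each $k\in\N$, $\E|A_t|^{2k}\le C_k\,\Psi_k(\|x\|)\,\|f\|_{w,\gg}^{2k}\,t^{-k+\gamma_k}$ for $t\ge1$.

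Next I would apply Borel--Cantelli along $t_n:=n^{\bb}$, with $\bb>1$ close enough to $1$ that $\bb\big(q+\ff12-\vv\big)\le1$ -- possible since $q<\ff12$ -- and then $k=k(\vv,\bb)$ large enough that $(2k\vv-\gamma_k)\bb>1$. By Chebyshev's inequality applied at $t=t_n$ to the last display, and since the power $\|f\|_{w,\gg}^{2k}$ cancels against the one produced by Chebyshev, one obtains $\sum_{n\ge1}\P\big(|A_{t_n}|>\tfrac{c_\vv}{2}\|f\|_{w,\gg}t_n^{-1/2+\vv}\big)<\8$; hence $\P$-a.s.\ there is a finite random index $N$ with $|A_{t_n}|\le\tfrac{c_\vv}{2}\|f\|_{w,\gg}t_n^{-1/2+\vv}$ for all $n\ge N$. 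For the interpolation, given $t_n\le t\le t_{n+1}$ write $t A_t=t_n A_{t_n}+\int_{t_n}^{t}f(X_s^x)\,\d s$; using $|f|\le\|f\|_{w,\gg}\,\bar\psi(\|\cdot\|)$ together with \eqref{AA3} on $[t_n,t_{n+1}]$ (valid once $n$ exceeds a further a.s.\ finite index, as $M_x<\8$ a.s.) one finds $|A_t-A_{t_n}|\le\ff{\|f\|_{w,\gg}}{t_n}\int_{t_n}^{t_{n+1}}\bar\psi(w^{-1}(s^q))\,\d s\lesssim\|f\|_{w,\gg}\,n^{\bb q-1}\lesssim\|f\|_{w,\gg}\,t_n^{-1/2+\vv}$ by the choice of $\bb$. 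Combining the two estimates gives \eqref{K2}, with $T^f_{\vv,x}$ taken to be the first threshold $t_0\ge1$ beyond which $|A_t|\le c_\vv\|f\|_{w,\gg}t^{-1/2+\vv}$ holds for all $t\ge t_0$.

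It remains to bound the moments of $T^f_{\vv,x}$: the event $\{T^f_{\vv,x}>t_n\}$ forces a failure, at some index $m\ge n$, of one of the two estimates just used, or else $M_x>t_n$, so that $\P\big(T^f_{\vv,x}>t_n\big)\le C_k\,\Psi_k(\|x\|)\,n^{-(2k\vv-\gamma_k)\bb+1}+\tau(\|x\|)\,t_n^{-1/q}$. Taking $k$ large (depending on the target moment) makes the right-hand side summable against $t_n^{j-1}$, so $\E|T^f_{\vv,x}|^j<\8$ for every $j\in\N$; since this bound does not involve $\|f\|_{w,\gg}$, it a fortiori dominates $1+\|f\|_{w,\gg}^{k(1+k)}$ uniformly in $f$, which is \eqref{K3}. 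The main obstacle is precisely step~(i): extracting a genuinely $t$-explicit control of $\E\psi(\|X_t^x\|)^{2k}$ from the pointwise integrability \eqref{AA1} using only \eqref{AA3}--\eqref{AA4} -- in concrete models one in fact has $\sup_{t\ge0}\E\psi(\|X_t^x\|)^{2k}<\8$, which trivializes it, but the abstract bookkeeping together with the compatible choice of $q,\vv,\bb,k$ needs care.
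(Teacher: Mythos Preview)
The paper does not prove this lemma at all: it is simply quoted from \cite[Corollary~2.4]{Sh}, so there is no in-paper argument to compare your proposal against. Your outline --- an $L^{2k}$ bound fed into Chebyshev, Borel--Cantelli along a polynomial subsequence $t_n=n^\bb$, then interpolation between successive $t_n$ using the pathwise growth bound \eqref{AA3} --- is exactly the standard machinery Shirikyan uses, so in spirit you have reconstructed the cited proof.

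Your self-identified ``main obstacle'' in step~(i) is, however, a genuine gap at the level of generality stated. From \eqref{AA1}--\eqref{AA4} alone you cannot extract a $t$-explicit bound on $\E\psi(\|X_t^x\|)^{2k}$: on $\{t<M_x\}$ you would need to H\"older against $\P(M_x>t)$, but that leaves a factor $(\E\psi(\|X_t^x\|)^{2k'})^{k/k'}$ for which \eqref{AA1} gives only finiteness, not growth control; and on $\{t\ge M_x\}$ the quantity $\bar\psi(w^{-1}(t^q))$ has no a~priori polynomial growth since $\psi$ and $w$ are unrelated in the abstract hypotheses. Your parenthetical remark is the honest resolution: in \cite{Sh} and in every application here (where $\psi(r)=w(r)=1+r^{p/2}$, cf.\ the proof of Theorem~\ref{th1}), one in fact has $\sup_{t\ge0}\E\psi(\|X_t^x\|)^{2k}\le C_k(\|x\|)<\infty$ directly from \eqref{eq5}, which makes $\gamma_k=0$ and trivialises the exponent bookkeeping. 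So your sketch is correct for the lemma as actually used, but the lemma as restated in the paper is slightly under-hypothesised for the argument you give --- a defect of the restatement, not of your strategy.
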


Let  $f\in C_{w,\gg}(\BB)$ and $x\in\BB$,      assume that 
\beq\label{MGG} M_t^{f,x}:= \int_0^t \big\{f(X_s^x)-P_s f(x)\big\}\d s + \int_t^\infty \big\{P_{s-t} f(X_t^x)- P_s f(x)\big\}\d s,\ \ t\ge 0\end{equation} is a
 well-defined square integrable    martingale. Consider its discrete time quadratic
 variation process
$$\<M^{f,x}\>_k:= \sum_{i=1}^k \E \big((M_i^{f,x}- M_{i-1}^{f,x})^2\big|\F_{i-1}\big),\ \ k\in \mathbb N.$$
Let $\lfloor t\rfloor=\sup\{k\in\mathbb Z_+: k\le t\}$ be the integer part of $t\ge 0$.  The following  CLT is due to \cite[Theorem 2.8]{Sh}.

\begin{lem}\label{APP}  Let  $f\in C_{w,\gg}(\BB)$ and $x\in\BB$ such that $M_t^{f,x}$ in $\eqref{MGG}$ is a well-defined square integrable martingale. Assume that
\begin{equation}\label{G1}
\E\Big(\sup_{t\in[k,k+1 ]}\e^{|\psi(\|X_t^x\|)|^\aa}\Big)\le
\kk(\|x\|),~~~~k\ge0,~~x\in\BB
\end{equation} holds for some constant   $\aa>0$ and  continuous function $\kk:\R_+\to\R_+$.  Then  \begin{enumerate}
\item[$(1)$] For any constants  $D,q>0$  and $\vv\in (0,1/4)$,  there exists an increasing function $h: [0,\infty)\times[0,\infty)\to [0,\infty)$ such that for any $x\in\BB$ and $f\in C_{w,\gg}(\BB)$,
\begin{equation*}\label{K4}
\begin{split}
&\sup_{z\in\R}\Big|\P\Big(\ff{1}{\ss t}\int_0^tf(X_s^x)\d s\le
z\Big)-\Phi_{D}(z)\Big|\\
&\le
t^{-\ff{1}{4}+\vv}h(\|x\|,\|f\|_{w,\gg})
 +D^{-4q}\lfloor t\rfloor^{q(1-4\vv)}\E\big|\lfloor
t\rfloor^{-1} \<M^{f,x}\>_{\lfloor t\rfloor}-D^2\big|^{2q},~~t\ge1;
\end{split}
\end{equation*}
\item[$(2)$]  There exists an increasing function $h: [0,\infty)\times[0,\infty)\to [0,\infty)$ such that for any $x\in\BB$ and $f\in C_{w,\gg}(\BB)$,
\begin{equation*}
\begin{split}
&\sup_{z\in\R}\Big((1\wedge|z|)\Big|\P\Big(\ff{1}{\ss
t}\int_0^tf(X_s^x)\d s\le z\Big)-\Phi_0(z)\Big|\Big)\\
&\le t^{-\ff{1}{4}}h(\|x\|,\|f\|_{w,\gg})+\lfloor
t\rfloor^{-\ff{1}{2}}( \E \<M^{f,x}\>_{\lfloor
t\rfloor})^{1/2},~~~t\ge1.
\end{split}
\end{equation*}
\end{enumerate}

\end{lem}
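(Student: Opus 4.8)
As noted above, Lemma \ref{APP} is \cite[Theorem 2.8]{Sh} transferred from a Hilbert space to the separable Banach space $\BB$, and since that proof uses only the norm $\|\cdot\|$ it carries over verbatim; nevertheless, let me describe the route I would follow. The plan is to reduce both parts to a quantitative martingale central limit theorem. First I would take $f$ with $\mu_\infty(f)=0$ (the case of interest below; otherwise replace $f$ by $f-\mu_\infty(f)$, which leaves $M^{f,x}$ unchanged), note that \eqref{AA} makes $R_f(x):=\int_0^\8 P_sf(x)\,\d s$ well defined with $|R_f(x)|\le c\,\psi(\|x\|)\|f\|_{w,\gg}$, and then split the two integrals in \eqref{MGG}: with $\int_t^\8 P_{s-t}f(X_t^x)\,\d s=R_f(X_t^x)$ and $\int_0^\8 P_sf(x)\,\d s=R_f(x)$ this gives
\[M_t^{f,x}=\int_0^t f(X_s^x)\,\d s+R_f(X_t^x)-R_f(x),\qquad t\ge0,\]
so $\ff1{\ss t}\int_0^t f(X_s^x)\,\d s=\ff1{\ss t}M_t^{f,x}+\ff1{\ss t}(R_f(x)-R_f(X_t^x))$. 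Since \eqref{G1} controls every polynomial moment of $\psi(\|X_t^x\|)$, the remainder is $O(t^{-1/2})$ in each $L^q$, and an Esseen-type smoothing comparison turns this into a contribution $\le t^{-1/4+\vv}h(\|x\|,\|f\|_{w,\gg})$; so it suffices to prove both estimates for $\ff1{\ss t}M_t^{f,x}$.

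Next I would discretize, writing $M_n^{f,x}=\sum_{k=1}^n\zeta_k$ with $\zeta_k:=M_k^{f,x}-M_{k-1}^{f,x}$, a martingale difference sequence for $(\F_k)$, and passing from $t$ to $\lfloor t\rfloor$ by bounding $M_t^{f,x}-M_{\lfloor t\rfloor}^{f,x}$ and $(t^{-1/2}-\lfloor t\rfloor^{-1/2})M_{\lfloor t\rfloor}^{f,x}$ in $L^q$ via Burkholder--Davis--Gundy over unit time intervals, \eqref{G1}, and the growth of $R_f$, again at a cost $\le t^{-1/4+\vv}h(\|x\|,\|f\|_{w,\gg})$. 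Then I would apply a quantitative martingale central limit theorem to $\lfloor t\rfloor^{-1/2}\sum_{k=1}^{\lfloor t\rfloor}\zeta_k$, obtaining a bound with two pieces: a universal rate $\lfloor t\rfloor^{-1/4+\vv}$ governed by a conditional Lyapunov ratio $\sum_k\E(|\zeta_k|^{2+\dd}\mid\F_{k-1})$, bounded in terms of $\|x\|$ and $\|f\|_{w,\gg}$ precisely through the exponential moment hypothesis \eqref{G1}; and a term measuring how far the normalized bracket $\lfloor t\rfloor^{-1}\<M^{f,x}\>_{\lfloor t\rfloor}$ lies from the target variance $D^2$, which the statement deliberately leaves unexpanded as $D^{-4q}\lfloor t\rfloor^{q(1-4\vv)}\E|\lfloor t\rfloor^{-1}\<M^{f,x}\>_{\lfloor t\rfloor}-D^2|^{2q}$.

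For the degenerate case $D=0$ in part $(2)$, the limit law is the Dirac mass $\Phi_0$, whose distribution function jumps at $z=0$, so the Kolmogorov distance cannot be made small there; I would weight by $1\wedge|z|$ to remove the obstruction away from the origin, and the bracket-fluctuation term then degrades to the cruder $L^1$ bound $\lfloor t\rfloor^{-1/2}(\E\<M^{f,x}\>_{\lfloor t\rfloor})^{1/2}$. Finally, every constant generated along the way depends on $x$ only through $\|x\|$ and on $f$ only through $\|f\|_{w,\gg}$, since the inputs \eqref{AA}, \eqref{AA1}, \eqref{G1} and the growth of $R_f$ all have this shape; collecting them into a single increasing function $h$ yields the two assertions. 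The hard part will be the martingale Berry--Esseen step: getting the $t^{-1/4}$ rate \emph{uniformly} in $x$ and $f$ forces a careful accounting of how the conditional Lyapunov ratios grow with $\|x\|$ and $\|f\|_{w,\gg}$, and it is exactly for this that an exponential, rather than merely polynomial, moment bound on $\psi(\|X_t^x\|)$ is demanded in \eqref{G1}. Because none of these estimates uses bilinearity, the passage from the Hilbert-space setting of \cite{Sh} to a general separable Banach space $\BB$ is automatic.
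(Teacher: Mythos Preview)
Your proposal is correct and matches the paper's treatment: the paper does not prove this lemma but simply cites it as \cite[Theorem~2.8]{Sh}, noting only that the Hilbert-space proof uses the norm alone and hence transfers to a separable Banach space. Your additional sketch of the route---corrector decomposition $M_t^{f,x}=\int_0^t f(X_s^x)\,\d s+R_f(X_t^x)-R_f(x)$, discretization to integer times, and a quantitative martingale Berry--Esseen bound yielding the $t^{-1/4+\vv}$ rate plus an unexpanded bracket-deviation term---is faithful to Shirikyan's argument, so there is nothing to correct.
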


Finally, let $(M_n)_{n\ge0}$ be a square integrable martingale  and
let
$Z_n=M_n-M_{n-1}$ be the martingale difference.
The following result is taken from \cite[Theorem 1]{HS}.

 \begin{lem}\label{L1}
  Assume  that $S_n:= \E M_n \to\8$ as $n\to\8$,  and   there exists a constant   $\dd >0$ such that
\begin{equation}\label{J1}
\sum_{n=1}^\8S_n^{-4}\E(Z_n^4{\bf1}_{\{|Z_n|\le \dd
S_n\}})<\8,~~~~~\sum_{n=1}^\8S_n^{-1}\E(Z_n{\bf1}_{\{|Z_n|\le \dd
S_n\}})<\8,
\end{equation} and $\P$-a.s.
\begin{equation}\label{J2}
 \lim_{n\to\infty} S_n^{-2}\sum_{k=1}^n Z_k^2=1.
\end{equation}
Then the sequence $(\LL_n)_{n\ge 1}$ of random variables on
$C([0,1];\R)$ defined by
\begin{equation*}
\LL_n(t)=  \sum_{k=0}^{n-1} 1_{\{S_k^2\le t S_n^2\le S_{k+1}^2\}}
\ff{M_k+(S_n^2t-S_k^2)(S_{k+1}^2-S_k^2)Z_{k+1}}{\ss{2S_n^2\log\log
S_n^2}},\ \ t\in [0,1]
\end{equation*}
  is almost surely relatively compact,
 and the set of its limits points coincides with $\mathcal{H}$ in $\eqref{*PQ}$.
\end{lem}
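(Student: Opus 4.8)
The plan is to prove this functional (Strassen-type) law of the iterated logarithm by reducing the martingale to a Brownian motion via a Skorokhod embedding and then transporting the classical Strassen cluster-set theorem through the resulting random time change. Throughout write $S_n^2=\E M_n^2=\sum_{k=1}^n\E Z_k^2$ (the form of $\LL_n$ and of \eqref{J2} shows this is the intended reading of $S_n$), so $S_n\to\8$, and recall that $\LL_n$ is, up to the normalization $\ss{2S_n^2\log\log S_n^2}$, the piecewise-linear interpolation of the points $(S_k^2,M_k)_{0\le k\le n}$ read on the deterministic clock $S^2$.

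First I would embed the martingale into Brownian motion: on a possibly enlarged space one constructs a standard Brownian motion $(W(t))_{t\ge0}$ and an increasing family of stopping times $0=T_0\le T_1\le\cdots$ with $M_n=W(T_n)$ such that, writing $\DD_n:=T_n-T_{n-1}$, one has $\E(\DD_n\,|\,\F_{n-1})=\E(Z_n^2\,|\,\F_{n-1})$ and the fourth-moment control $\E(\DD_n^2\,|\,\F_{n-1})\1\E(Z_n^4\,|\,\F_{n-1})$. The quantitative heart is to show that the random clock matches the deterministic one, $T_n/S_n^2\to1$ $\P$-a.s. Since \eqref{J2} already gives $S_n^{-2}\sum_{k=1}^n Z_k^2\to1$, it suffices to compare $T_n$ with $\sum_{k=1}^n Z_k^2$, and the truncation hypotheses \eqref{J1} are used exactly here: on $\{|Z_k|\le\dd S_k\}$ the fourth-moment sum controls the martingale part $\sum(\DD_k-\E(\DD_k|\F_{k-1}))$ through a martingale $L^2$ strong law (Chow's theorem) and Kronecker's lemma, while the first-moment sum absorbs the centering bias created by the truncation; the large increments $\{|Z_k|>\dd S_k\}$ contribute only finitely often by Borel–Cantelli.

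With the clock asymptotics in hand I would transfer Strassen's theorem. By Strassen's classical functional LIL, the rescaled family $\eta_c(t):=W(ct)/\ss{2c\log\log c}$, $t\in[0,1]$, is $\P$-a.s. relatively compact in $C([0,1];\R)$ with cluster set exactly $\mathcal H$ as $c\to\8$; because the hypotheses force individual increments to be negligible ($\E Z_n^2=o(S_n^2)$), the discrete times $c=S_n^2$ increase slowly enough on the $\log\log$ scale that the cluster set of $\{\eta_{S_n^2}\}$ is still all of $\mathcal H$. It then remains to show $\sup_{t\in[0,1]}|\LL_n(t)-\eta_{S_n^2}(t)|\to0$ $\P$-a.s. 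This sup-norm error splits into (i) the discrepancy from reading the path on the clock $S_k^2$ rather than on the intrinsic clock $T_k$, controlled by $T_n/S_n^2\to1$ together with $\max_{k\le n}|T_k-S_k^2|/S_n^2\to0$, and (ii) the gap between the Brownian path and its linear interpolation at the breakpoints, controlled by L\'evy's modulus of continuity for $W$. Both are made uniformly small by working along a geometric subsequence $n_j$ with $S_{n_j}^2\sim\theta^{\,j}$, applying Borel–Cantelli, and filling the ranges $n_j\le n<n_{j+1}$ by monotonicity of $\log\log$ and the modulus of continuity. Relative compactness of $\{\LL_n\}$ and the identity of its cluster set with $\mathcal H$ then follow from those of $\{\eta_{S_n^2}\}$.

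The main obstacle is the a.s. \emph{uniform} control of the random time change: obtaining $\max_{k\le n}|T_k-S_k^2|=o(S_n^2)$ (not merely $T_n\sim S_n^2$) is what forces the joint use of both parts of \eqref{J1}, the fourth-moment truncation supplying the maximal control of the embedding-time fluctuations and the first-moment truncation cancelling the bias, while the passage from the quadratic variation $\sum Z_k^2$ in \eqref{J2} to the embedding clock $T_n$ must be made without losing the $o(S_n^2)$ rate. A direct alternative, avoiding embedding, would instead prove the two inclusions of Strassen's theorem separately, via an exponential martingale inequality applied to the truncated increments for relative compactness and a conditional Borel–Cantelli argument along blocks for the lower inclusion; in that route the same two parts of \eqref{J1} play the identical roles and \eqref{J2} again supplies the correct normalization in place of the deterministic variance.
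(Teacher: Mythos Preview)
The paper does not supply its own proof of this lemma: it is quoted verbatim as \cite[Theorem~1]{HS} (Heyde--Scott), so there is no in-paper argument to compare against. Your outline is correct and in fact reconstructs the Heyde--Scott method itself: Skorokhod-embed $(M_n)$ into a Brownian motion $W$ with stopping times $T_n$, use the two truncated-moment sums in \eqref{J1} together with \eqref{J2} to obtain the clock asymptotics $T_n/S_n^2\to1$ (and the maximal version $\max_{k\le n}|T_k-S_k^2|=o(S_n^2)$), and then pull Strassen's functional LIL for $W$ back through the time change, handling the interpolation error via the modulus of continuity along a geometric subsequence. Your reading $S_n^2=\E M_n^2$ is the intended one (the paper's later use $S_n^\xi=\sqrt{\E|M_n^\xi|^2}$ confirms this), and your identification of the roles of the two parts of \eqref{J1}---fourth-moment control of the embedding-time fluctuations, first-moment control of the truncation bias---matches the original argument.
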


\section{Proof of Theorem \ref{th1}}\label{sec2}

It suffices to verify conditions in Lemmas \ref{A00} and \ref{s0}
for the present model, where $\BB=\C, w(r)= 1+r^{p/2},r\ge0$. To
this end, we present the following lemma.

\begin{lem}\label{lem1}
 Under assumptions of Theorem $\ref{th1}$,   for any $p\ge 1$ and $\gg\in (0,1]$,    there
exist constants $c,\bb>0 $ such that
\begin{equation}\label{eq5}
\E\|X_t^\xi\|^p_\8 \le c\,(1+ \e^{-\bb t}\|\xi\|_\8^p),~~~\xi\in\C, t\ge 0,
\end{equation} and
 \begin{equation}\label{eq6}
\mathbb {W}_{p, \gg}(\mu P_t, \nu P_t)\le c\,\e^{-\bb
t}\mathbb {W}_{p,\gg}(\mu, \nu),\ \ \mu,\nu  \in \scr
P_{p, \gg}(\C), t\ge 0.
\end{equation} Consequently, $P_t$ has a unique invariant probability measure $\mu_\infty$ and  $\mu_\infty(\|\cdot\|_\8^p)<\infty$ for all $p\ge 1.$
 \end{lem}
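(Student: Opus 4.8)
The plan is to establish the three assertions in the order \eqref{eq5} $\Rightarrow$ \eqref{eq6} $\Rightarrow$ (existence, uniqueness and finite $p$-th moments of $\mu_\infty$), the last being a soft consequence of the first two. For the moment bound \eqref{eq5} I would start from It\^o's formula and the dissipativity: taking $\eta$ to be the constant path $\mathbf 0$ in ({\bf A2}) and absorbing $2\<\xi(0),b(\mathbf 0)\>$ (with $b(\mathbf 0)$ finite by ({\bf A1})) gives, for small $\vv>0$,
\[ 2\<\xi(0),b(\xi)\>\le-(\ll_1-\vv)|\xi(0)|^2+\ll_2\|\xi\|_\8^2+C_\vv,\qquad \xi\in\C. \]
Since \eqref{eq1} admits a unique segment solution which is Markov (\cite{VS,BS}), applying It\^o's formula to $|X(t)|^2$, taking expectations after localization, and bounding the diffusion term by ({\bf A3}) yields for $u(t):=\E|X(t)|^2$ a delay differential inequality
\[ u'(t)\le-(\ll_1-\vv)u(t)+\ll_2\sup_{s\in[(t-r_0)\vee 0,\,t]}u(s)+C,\qquad t\ge0, \]
with $u=|\xi|^2$ on $[-r_0,0]$. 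Since $\ll_1>\ll_2$ (a consequence of $\ll_1>\ll_2\e^{\ll_1 r_0}$), comparing $u$ with $K+K'\e^{-\bb t}$ for a suitable $\bb>0$ (of Halanay type: $\bb$ the positive root of $\bb+\ll_2\e^{\bb r_0}=\ll_1-\vv$, with $\vv$ small) gives $\E|X(t)|^2\le c(1+\e^{-\bb t}\|\xi\|_\8^2)$. To pass from $\E|X(t)|^2$ to $\E\|X_t\|_\8^2$, I would write $X$ on $[t-r_0,t]$ through its It\^o representation started at $t-r_0$, bound the drift one-sidedly (dropping the term $-(\ll_1-\vv)|X(s)|^2$), use the bound just obtained together with Burkholder--Davis--Gundy for the stochastic integral over a window of length $r_0$, and close a Gronwall estimate over consecutive windows; this gives \eqref{eq5} for $p=2$. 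The case $p>2$ is the same computation applied to $|X(t)|^p$, and $p\in[1,2)$ follows from $p=2$ by Jensen's inequality.

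For \eqref{eq6} it suffices to prove the one-point version
\[ \E\,\rho_{p,\gg}\big(X_t^\xi,X_t^\eta\big)\le c\,\e^{-\bb t}\rho_{p,\gg}(\xi,\eta),\qquad \xi,\eta\in\C,\ t\ge0, \]
since integrating it against an optimal coupling of $\mu$ and $\nu$ (which exists by lower semicontinuity of $\rho_{p,\gg}$) immediately yields \eqref{eq6}. This estimate is obtained by coupling $X^\xi$ with a process distributed as $X^\eta$: a synchronous coupling controls the weight $\ss{1+\|X_t^\xi\|_\8^p+\|X_t^\eta\|_\8^p}$ (whose $L^m$-norms are bounded uniformly in $t$ for every $m$, by \eqref{eq5}), while on the event that the two paths have not yet met one adds a drift correction --- realizable because $\si$ is invertible with bounded inverse ({\bf A3}) --- which, together with ({\bf A2}), forces $1\wedge\|X_t^\xi-X_t^\eta\|_\8^\gg$ to contract exponentially. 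This is precisely the construction of \cite{BWY18} (see also \cite{BS,B14,HMS}) carried out under ({\bf A1})--({\bf A3}); I would invoke it rather than reproduce it, and only check that replacing $\|\cdot\|_\8^\gg$ by $1\wedge\|\cdot\|_\8^\gg$ in $\rho_{p,\gg}$ affects constants only.

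Given \eqref{eq5} and \eqref{eq6}, the invariant measure is produced in the standard way. Applying \eqref{eq6} with $\nu=\dd_{\mathbf 0}$ and $\mu=\dd_{\mathbf 0}P_s$ gives $\W_{p,\gg}(\dd_{\mathbf 0}P_{t+s},\dd_{\mathbf 0}P_t)\le c\,\e^{-\bb t}\W_{p,\gg}(\dd_{\mathbf 0}P_s,\dd_{\mathbf 0})$, and the right-hand factor is bounded uniformly in $s$ by \eqref{eq5}, so $(\dd_{\mathbf 0}P_t)_{t\ge0}$ is Cauchy for $\W_{p,\gg}$ and, as in \cite{BWY18,BS}, converges to some probability measure $\mu_\infty$; passing to the limit in $\dd_{\mathbf 0}P_{t+s}=(\dd_{\mathbf 0}P_t)P_s$ via \eqref{eq6} shows $\mu_\infty P_s=\mu_\infty$, and uniqueness is immediate since two invariant measures $\mu,\tilde\mu\in\scr P_{p,\gg}(\C)$ satisfy $\W_{p,\gg}(\mu,\tilde\mu)=\W_{p,\gg}(\mu P_t,\tilde\mu P_t)\le c\e^{-\bb t}\W_{p,\gg}(\mu,\tilde\mu)\to0$. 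Finally, since $\rho_{p,\gg}\ge 1\wedge\|\cdot\|_\8^\gg$ and the latter metrizes the topology of $\C$, $\W_{p,\gg}$-convergence implies weak convergence, so testing $\dd_{\mathbf 0}P_{t}\to\mu_\infty$ against the bounded continuous maps $\xi\mapsto\|\xi\|_\8^p\wedge N$ and letting $N\to\8$ by monotone convergence gives $\mu_\infty(\|\cdot\|_\8^p)\le\sup_{t\ge1}\E\|X_t^{\mathbf 0}\|_\8^p<\8$ for every $p\ge1$; in particular $(\mu_\infty\times\mu_\infty)(\rho_{p,\gg})<\8$, i.e. $\mu_\infty\in\scr P_{p,\gg}(\C)$.

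The genuinely technical point is the sup-norm part of Step 1 --- controlling $\E\|X_t\|_\8^2=\E\sup_{s\in[t-r_0,t]}|X(s)|^2$ with exponential decay, not merely $\E|X(t)|^2$ --- because the drift feeds back the whole segment norm, so closing the window-by-window Gronwall estimate is exactly where the strict dissipativity $\ll_1>\ll_2\e^{\ll_1 r_0}$ is needed. The contraction \eqref{eq6} is conceptually the more delicate ingredient, but since it is available from \cite{BWY18,BS,B14,HMS} under exactly ({\bf A1})--({\bf A3}), for that step the work reduces to quoting the result and matching constants.
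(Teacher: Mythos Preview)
Your overall plan is sound and Steps~2--3 match the paper's (the contraction \eqref{eq6} is quoted from \cite{BWY18} once the Lyapunov bound \eqref{eq5} is in hand, and the invariant-measure consequences are routine). The gap is in Step~1. After It\^o's formula and expectation the inequality you actually obtain is
\[
u'(t)\le-(\ll_1-\vv)u(t)+\ll_2\,\E\|X_t^\xi\|_\8^2+C,
\]
and the middle term on the right is $\E\sup_{s\in[t-r_0,t]}|X^\xi(s)|^2$, \emph{not} $\sup_{s\in[t-r_0,t]}\E|X^\xi(s)|^2$. Since $\E\sup\ge\sup\E$, you cannot replace it by $\sup_s u(s)$, so the Halanay comparison for $u$ does not close. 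In other words, the quantity you need in order to bound $u$ is precisely the sup-norm moment you postpone to the second half of Step~1, and the two halves are circular rather than sequential.

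The paper resolves this by never decoupling the point value from the segment norm. It applies It\^o to $\e^{(\ll_1-\vv)t}|X^\xi(t)|^2$, keeps the running-supremum $N^\xi(t)$ of the martingale part, and uses the pathwise inequality
\[
\e^{(\ll_1-\vv)t}\|X_t^\xi\|_\8^2\le \e^{(\ll_1-\vv)r_0}\Big(\|\xi\|_\8^2\vee\sup_{0\le s\le t}\e^{(\ll_1-\vv)s}|X^\xi(s)|^2\Big)
\]
to feed the segment norm back into the same supremum before any expectation is taken. A pathwise Gronwall (as in \cite{Dra}) then bounds $\|X_t^\xi\|_\8^2$ by an expression in $N^\xi(t)$; only at this stage are expectations taken, and BDG plus a second Gronwall closes the estimate for $p\ge2$. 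This is where $\ll_1>\ll_2\e^{\ll_1 r_0}$ enters, via the positivity of $\ll_\vv:=\ll_1-\vv-\ll_2\e^{(\ll_1-\vv)r_0}$. Your instinct about where the difficulty lies is correct, but the fix is to carry the exponential weight pathwise rather than to take expectations first.
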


\begin{proof} (1)    By Jensen's inequality, concerning  \eqref{eq5} we only need to consider $p\ge 2$.
Since  $ \ll_1-\ll_2\e^{\ll_1r_0}>0$, there exists a constant   $\vv
\in (0,\ll_1)$
 such that
\begin{equation}\label{eq0}
\ll_\vv:=\ll_1-\ll_2\e^{(\ll_1-\vv)r_0}-\vv>0.
\end{equation}
According to  ({\bf A1}) and ({\bf A3}), we may find   a constant
$c_0>0$ such that
\begin{equation*}
2\,\<\xi(0),b(\xi)\>+\|\si(\xi)\|_{\rm HS}^2\le
c_0-(\ll_1-\vv)|\xi(0)|^2+\ll_2\|\xi\|_\8^2,~~~\xi\in\C.
\end{equation*}
  So,   by It\^o's formula,
\begin{equation}\label{eq2}
\begin{split}
\e^{(\ll_1-\vv)
t}|X^\xi(t)|^2&=|\xi(0)|^2+M^\xi(t)+\int_0^t\e^{(\ll_1-\vv)
s}\Big((\ll_1-\vv)|X^\xi(s)|^2\\
& \quad+2\<X^\xi(s),b(X_s^\xi)\>+\|\si(X_s^\xi)\|_{\rm HS}^2 \Big)\d s \\
&\le |\xi(0)|^2+M^\xi(t)+c_1\,\e^{(\ll_1-\vv)
t}+\ll_2\int_0^t\e^{(\ll_1-\vv) s}\|X_s^\xi\|_\8^2\d s
\end{split}
\end{equation} holds for some constant $c_1>0$ and the martingale
$$M^\xi(t):=2\int_0^t \e^{(\ll_1-\vv)  s}\<\si^*(X_s^\xi)X^\xi(s),\d W(s)\>,~~~t\ge0.$$ Noting that
\begin{equation*}
\e^{(\ll_1-\vv) t}\|X_t^\xi\|_\8^2\le \e^{(\ll_1-\vv) r_0
}\Big(\|\xi\|_\8^2\vee \sup_{0\le s\le t}(\e^{(\ll_1-\vv)
s}|X^\xi(s)|^2)\Big),
\end{equation*} we deduce from \eqref{eq2} that
\begin{equation*}
\begin{split}
\|X_t^\xi\|_\8^2&\le \e^{(\ll_1-\vv) r_0 }\Big\{c_1+
\e^{-(\ll_1-\vv) t}\|\xi\|_\8^2+\e^{-(\ll_1-\vv)
t}N^\xi(t)\\
&\quad+\ll_2\int_0^t\e^{-(\ll_1-\vv) (t-s)}\|X_s^\xi\|_\8^2\d
s\Big\},\ \ t\ge 0,
\end{split}
\end{equation*}
where  $N^\xi(t):=\sup_{0\le s\le t}M^\xi(s)$. By invoking
Gronwall's inequality (see e.g. \cite[Theorem 11]{Dra}), this
implies
\begin{equation*}
\begin{split}
\|X^\xi_t\|^2_\8&\le\e^{(\ll_1-\vv) r_0 }\Big\{c_1 +\e^{-(\ll_1-\vv) t}\|\xi\|_\8^2+ \e^{-(\ll_1-\vv)
t}N^\xi(t)\Big\}\\
&\quad+\ll_2\e^{2(\ll_1-\vv) r_0 } \int_0^t\Big\{c_1+
\e^{-(\ll_1-\vv) s}\|\xi\|_\8^2+ \e^{-(\ll_1-\vv)
s}N^\xi(s)\Big\}\e^{-\ll_\vv(t- s)}\d s,\ \ t\ge 0.
\end{split}
\end{equation*} Combining this with H\"older's  inequality,   for fixed  $p\ge 2$  we may find   constants $c_2,c_3>0$ such that
\begin{equation}\label{A0}
\begin{split}
 & \E\|X^\xi_t\|^p_\8\le c_2 + c_2  \e^{-\ff{p}{2}\ll_\vv
 t}\|\xi\|_\8^p+ c_2 \e^{-\ff{p}{2} (\ll_1-\vv)
 t}(N^\xi(t))^{p/2} \\
 &\qquad  \qquad +c_2 \E \bigg|\int_0^t\e^{-(\ll_1-\vv)
s}\e^{-\ll_\vv(t- s)}N^\xi(s)\d s\bigg|^{p/2} \\
&\le c_3   +c_3 \e^{-\ff{p}{2}\ll_\vv
 t}\|\xi\|_\8^p+c_3 \e^{-\ff{p}{2} (\ll_1-\vv)
 t}\E (N^\xi(t))^{p/2} \\
 &\qquad\qquad  +c_3  \int_0^t\e^{-\ff{p}{2} (\ll_1-\vv)
 s-\ll_\vv(t-s)}\E(N^\xi(s))^{p/2} \d s.
\end{split}
\end{equation}
On the other hand, by means of {\bf (A3)} and using   BDG's and
H\"older's inequalities, there exist constants $c_4,c_5>0$ such that
\begin{equation*}
\begin{split}
& \e^{-\ff{p}{2}(\ll_1-\vv)
 t}\E(N^\xi(t))^{p/2}\le   c_4\E\Big(\int_0^t\e^{-2(\ll_1-\vv)(t-s)}|X^\xi(s)|^2\d
 s\Big)^{p/4}\\
 &\le c_4 \E\bigg[\bigg(\int_0^t \e^{-2(\ll_1-\vv)(t-s)} |X^\xi(s)|^p \d s \bigg)^{\ff 1 2} \bigg(\int_0^t \e^{-2(\ll_1-\vv)(t-s)}\d s\bigg)^{\ff{p-2}4}\bigg] \\
 &\le c_5 + \ff{(1\land\ll_\vv)^2}{4 c_3}   \int_0^t \e^{-2(\ll_1-\vv)(t-s)}\E  |X^\xi(s)|^p \d s,\ \ t\ge 0.\end{split}
\end{equation*}
Substituting this into \eqref{A0}, and noting that  due to $\ll_1-\vv>\ll_\vv>0$ we have
\beg{align*}& \int_0^t\e^{-\ll_\vv(t-s)} \d s  \int_0^s \e^{-2(\ll_1-\vv)(s-r)}\E  |X^\xi(r)|^p \d r \\
&=\int_0^t \e^{2(\ll_1-\vv)r-\ll_\vv t} \E|X^\xi(r)|^p\d r \int_r^t \e^{-(2(\ll_1-\vv)-\ll_\vv)s}\d s\\
& \le \ff 1 {2(\ll_1-\vv)-\ll_\vv} \int_0^t \e^{ -\ll_\vv (t-r)} \E|X^\xi(r)|^p\d r\\
&\le \ff 1 {\ll_\vv} \int_0^t \e^{-\ll_\vv(t-r)} \E|X^\xi(r)|^p\d r,\end{align*}
we   may find  a constant $C>0$ such that
$$ \E\|X^\xi_t\|^p_\8\le  C(1+\|\xi\|_\8^p)+ \ff {\ll_\vv} 2 \int_0^t\e^{-\ll_\vv(t-s)} \E \|X_s^\xi\|_\8^p\d s,\ \ t\ge 0.$$ By a truncation argument with stopping times, we may and do assume that $\E\|X^\xi_t\|^p_\8<\infty$, so that by Gronwall's inequality, this implies the desired
estimate \eqref{eq5} for some constants $c,\bb>0.$

  (b) By \eqref{eq5},
 the Lyapunov condition  {\bf (A3)} in \cite[Theorem 1.1]{BWY18}
holds for $V(\xi):=\|\xi\|_\8^p,\xi\in\C$  and $\gamma= \bb.$ In
terms of \cite[Theorem 1.1]{BWY18}, this together with {\bf (A1)}
and {\bf (A2)}   implies
  \eqref{eq6} for possibly different constants $c,\bb>0$, which then implies the existence and uniqueness of the invariant probability measure $\mu_\infty \in  \scr P_{p,\gg}(\C)$. Since $p\ge 1$ is arbitrary, we conclude that $\mu_\infty(\|\cdot\|_\8^p)<\infty$ holds for all $p\ge 1.$
\end{proof}

\begin{proof}[Proof of Theorem \ref{th1}] From \eqref{ABC} and        \eqref{eq5}   we see that assumptions in Lemma \ref{A00} holds for $\BB= \C, w(r)=1+r^{p/2},
k=1, \varphi(t)=c\,\e^{-\bb t}, $ and
  $\psi(r)=1+r^{p/2}.$
Then  (1) follows from Lemma \ref{A00} .

Next, to prove (2),  we only need to verify conditions \eqref{AA4} and \eqref{AA3} in Lemma \ref{s0}.  For  $q\in(0,1/2)$, consider the following $[0,\infty]$-valued random variables:
\begin{align*}
&M:=\inf\Big\{T\ge0: 16^{\ff{1}{p}} \|X_t^\xi\|^2_\8\le t^{\ff{4q}{p}}
~\mbox{ for }~t\ge T\Big\},\\
 & M':=\inf\Big\{m\in\mathbb{N}: 16^{\ff{1}{p}} \sup_{t\in[k,k+1]}\|X_t^\xi\|_\8^2\le k^{\ff{4q}{p}}  ~\mbox{ for
}~ \mathbb{N}\ni k \ge m+1 \Big\}.
\end{align*} Obviously, $M\le M'$.
 Since \begin{equation} \label{e1}
\sup_{t\in[k,k+1]}\|X_t^\xi\|_\8\le
\max_{i\in\{0,1,\cdots,\lfloor1/r_0\rfloor+1\}}\|X_{k+ir_0}^\xi\|_\8,
\end{equation}
by   \eqref{eq5} and applying
Chebyshev's inequality, we may find   a   constant $C (\xi)>0$ such that
\begin{align*}
&\sum_{k=1}^\8\P\Big(\sup_{t\in[k,k+1]}\|X_t^\xi\|_\8^2 \ge\ff{k^{\ff{4q}{p}}}{16^{\ff{1}{p}}}\Big)\\
  &\le  2^{1+\ff{1}{q}}\sum_{k=1}^\8
\ff{\E(\sup_{t\in[k,k+1]}\|X_t^\xi\|_\8^{\ff{p}{2}(1+\ff{1}{q})})}{k^{1+q}}
 \le C (\xi)\sum_{k=1}^\8 \ff{1}{k^{1+q}}<\8.
\end{align*}
So, by   Borel-Cantelli's lemma,
  there exists an $\mathbb N$-valued  random variable $K$ such that
\begin{equation*}
\P\Big(\sup_{t\in[k,k+1]}\|X_t^\xi\|_\8^2\le\ff{k^{\ff{4q}{p}}}{16^{\ff{1}{p}}}
~\mbox{ for }~k\ge K\Big)=1.
\end{equation*}
Therefore, $\P$-a.s.  $M \le M'<\infty$ and  \eqref{AA3} holds true.
Moreover,   \eqref{eq5} and  Chebyshev's inequality also imply
\begin{equation*}
\begin{split}
\E\,|M'|^{\ff{1}{q}}&=\sum_{k=0}^\8k^{\ff{1}{q}}\P(M'=k)\le
\sum_{k=1}^\8k^{\ff{1}{q}}\P\Big(\sup_{t\in[k,k+1]}\|X_t^\xi\|_\8^2>\ff{k^{\ff{4q}{p}}}{16^{\ff{1}{p}}}\Big)\\
&\le16^{\ff{\aa}{p}}\sum_{k=1}^\8\ff{\E(\sup_{t\in[k,k+1]}\|X_t\|_\8^{2\aa})}{k^{1+q}
}\\
&\le  c\, (1+\|\xi\|_\8^{2\aa}),~~~ \aa:=\ff{p}{4\,q}(1+q+1/q)
\end{split}
\end{equation*}
for some constant $c>0.$ This,
 together with $M\le M'$, leads to
\begin{equation}\label{e6}
\E\,M^{\ff{1}{q}}\le c\,(1+\|\xi\|_\8^{2\aa}),~~~q\in(0,1/2),
\end{equation}
which ensures condition \eqref{AA4}. Therefore, the proof is finished by  Lemma \ref{s0}.
\end{proof}

\section{Proof of Theorem \ref{th2}}\label{sec3}

To apply Lemma \ref{APP},  for fixed $f\in C_{p,\gg}(\C) $ with
$\mu_\infty(f)=0$,    consider \beq\label{MTG} M_t^{f,\xi} :=
\int_0^t \big\{f(X_u^\xi)- P_u f(\xi)\big\}\d u + \int_t^\infty
\big\{P_{u-t} f(X_t^\xi)- P_u f(\xi)\big\}\d u,\ \ t\ge 0,\, \xi\in
\C.\end{equation} Since $\mu_\infty(f)=0$, \eqref{ABC} implies
$$ |P_t f(\xi)|\le c\,\e^{-\bb t} \|f\|_{p,\gg}(1+\|\xi\|_\8^{p/2}),\ \ t\ge 0, \,\xi\in \C$$
for some constants $c,\bb>0$. So,  there exists an increasing function   $c:\R_+\to\R_+$ such that \eqref{eq5} yields
\beg{align*} & \E \bigg|\int_0^t \big\{f(X_u^\xi)- P_u f(\xi)\big\}\d u+ \int_t^\infty \big|P_{u-t} f(X_t^\xi)- P_u f(\xi)\big|\d u\bigg|^k \\
&\le  c(t) \|f\|_{p,\gg}^k \bigg( 1+  \E\|X_t^\xi\|_\8^{pk/2} +\|\xi\|_\8^{pk/2}+\int_0^t(1+\|\xi\|_\8^{pk/2}+ \E\|X_u^\xi\|_\8^{pk/2}) \d u 
\bigg)\\
&<\infty,\ \ t\ge 0. \end{align*} Hence,     $M_t^{f,\xi}$ is a
well-defined  martingale with $\E|M_t^{f,\xi}|^k<\infty$ for all
$k\ge 1$.

Next,  consider
 \begin{equation*}\label{A1}
\<M^{f,\xi}\>_k:=\sum_{i=1}^k\E\Big((M_i^{f,\xi}-M^{f,\xi}_{i-1})^2\Big|\F_{i-1}\Big),~~~k\in\mathbb N.
\end{equation*}
Let $R_f$ and $\varphi_f$ be  defined as  in \eqref{WWW} and
\eqref{S7}, respectively.  By the Markov property of
$(X_t^\xi)_{t\ge0}$, we have
$$
M_i^{f,\xi} =M_{i-1}^{f,\xi}+\int_{i-1}^i f(X_u^\xi) \d
u+R_f(X_i^\xi)-R_f(X_{i-1}^\xi)
$$
so that
 $$
\E((M_i^{f,\xi}-M_{i-1}^{f,\xi})^2|\F_{i-1})
=\varphi_f(X_{i-1}^\xi).  $$ Consequently, we arrive at
\begin{equation}\label{D6}
\<M^{f,\xi}\>_k=\sum_{i=0}^{k-1}\varphi_f(X_i^\xi),\ \ k\in\N.
\end{equation}

\begin{lem}\label{Lem3}
 Under assumptions of Theorem \ref{th1}, for any  $f\in
C_{p,\gg}(\C) $ with $\mu_\infty(f)=0$,
\begin{equation}\label{D5}
0\le  \mu_\infty(\varphi_f)=2\,\mu_\infty(fR_f)<\8.
\end{equation}
  \end{lem}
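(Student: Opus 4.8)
The plan is to establish the identity $\mu_\infty(\varphi_f)=2\mu_\infty(fR_f)$ by exploiting the stationarity of the process under $\mu_\infty$ together with the defining property of the corrector $R_f$, namely that $R_f$ solves the Poisson-type equation $R_f(\xi)-P_1R_f(\xi)=\int_0^1\{P_uf(\xi)-\mu_\infty(f)\}\,\d u$; since $\mu_\infty(f)=0$ this reads $R_f-P_1R_f=\int_0^1 P_uf\,\d u$. Starting from \eqref{S7}, write $\varphi_f(\xi)=\E|Y(\xi)|^2$ with $Y(\xi):=\int_0^1 f(X_r^\xi)\,\d r+R_f(X_1^\xi)-R_f(\xi)$, expand the square into $\E|\int_0^1 f(X_r^\xi)\d r|^2 + \E|R_f(X_1^\xi)-R_f(\xi)|^2 + 2\E[(\int_0^1 f(X_r^\xi)\d r)(R_f(X_1^\xi)-R_f(\xi))]$, and integrate each term against $\mu_\infty$.

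The key point is that when the initial datum is distributed as $\mu_\infty$, the process $(X_t)_{t\ge0}$ is stationary, so $\mu_\infty(P_1 g)=\mu_\infty(g)$ for any integrable $g$, and more generally $\E_{\mu_\infty}[g(X_1^\xi)h(\xi)] = \mu_\infty(h\cdot P_1 g)$. First I would handle the cross term: using the Markov property and the identity $\E[R_f(X_1^\xi)\mid \F_0]=P_1R_f(\xi)$ together with $\E[\int_0^1 f(X_r^\xi)\d r\mid\F_0]=\int_0^1 P_rf(\xi)\d r = R_f(\xi)-P_1R_f(\xi)$, one finds after integrating against $\mu_\infty$ that several terms collapse. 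Then I would handle the term $\mu_\infty(\E|R_f(X_1)-R_f(\xi)|^2)=2\mu_\infty(R_f^2)-2\mu_\infty(R_f\cdot P_1R_f)$ by stationarity, and the term $\mu_\infty(\E|\int_0^1 f(X_r)\d r|^2)$ by writing it as a double integral $\int_0^1\int_0^1 \mu_\infty(P_{|r-s|}(fP_{\cdot}f))\,\d r\,\d s$ type expression and relating it to $\mu_\infty(f(R_f-P_1R_f))$ via the resolvent structure. Careful bookkeeping should make all the $R_f$-squared contributions cancel, leaving exactly $2\mu_\infty(fR_f)$.

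An alternative and probably cleaner route I would try in parallel is to use the martingale structure directly: from \eqref{D6} we have $\<M^{f,\xi}\>_k=\sum_{i=0}^{k-1}\varphi_f(X_i^\xi)$, and under $\mu_\infty$ the stationarity gives $\E_{\mu_\infty}\<M^{f}\>_k = k\,\mu_\infty(\varphi_f)$. On the other hand $M_k^{f,\xi}$ is, modulo the telescoping corrector terms $R_f(X_k)-R_f(\xi)$, the additive functional $\int_0^k\{f(X_u)-\mu_\infty(f)\}\d u$; computing $\E_{\mu_\infty}|M_k^f|^2$ from the martingale property equals $\E_{\mu_\infty}\<M^f\>_k = k\mu_\infty(\varphi_f)$, while computing the same quantity from the explicit expression \eqref{MTG} and letting the boundary corrector terms be absorbed yields $2\int_0^\infty\mu_\infty(fP_uf)\d u\cdot k + o(k) = 2k\,\mu_\infty(f R_f)+o(k)$ by \eqref{WWW} and Fubini (justified by the exponential bound \eqref{ABC} and $\mu_\infty(\|\cdot\|_\8^p)<\infty$ from Lemma \ref{lem1}). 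Dividing by $k$ and letting $k\to\infty$ gives $\mu_\infty(\varphi_f)=2\mu_\infty(fR_f)$. Nonnegativity $\mu_\infty(\varphi_f)\ge0$ is immediate since $\varphi_f\ge0$ pointwise, and finiteness follows from $\|R_f\|_{p,\gg}<\infty$ (a consequence of \eqref{ABC}) and $\mu_\infty(\|\cdot\|_\8^p)<\infty$.

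The main obstacle will be the rigorous justification of the Fubini interchange and the control of the boundary terms $R_f(X_k)-R_f(\xi)$ in the limit $k\to\infty$ — one must verify that $\E_{\mu_\infty}|R_f(X_k)|^2$ stays bounded (which it does, by stationarity, since it equals $\mu_\infty(R_f^2)<\infty$) and that the cross terms between the martingale part and these corrector increments contribute only $O(1)$ rather than $O(k)$, so that they wash out after dividing by $k$. This is essentially the standard Kipnis–Varadhan decomposition bookkeeping, but it must be carried out carefully here because the reference function $f$ is only Lipschitz with respect to the quasi-metric $\rho_{p,\gg}$ and has polynomial growth, so every moment estimate relies on the exponential Wasserstein contraction \eqref{EE} and the moment bound \eqref{eq5} rather than on boundedness of $f$.
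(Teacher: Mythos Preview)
Your primary route is essentially the paper's: expand \eqref{S7}, use $P_1 R_f = R_f - \int_0^1 P_u f\,\d u$ and the Markov property to rewrite the cross terms, then let stationarity cancel the quadratic-in-$R_f$ pieces. The paper performs this expansion \emph{pointwise} before integrating, arriving at the exact identity
\[
\varphi_f(\xi) = P_1(R_f^2)(\xi) - R_f(\xi)^2 + 2\int_0^1 P_s(fR_f)(\xi)\,\d s,
\]
from which \eqref{D5} follows at once since $\mu_\infty(P_1(R_f^2))=\mu_\infty(R_f^2)$ and $\mu_\infty(P_s(fR_f))=\mu_\infty(fR_f)$. This pointwise formula is not incidental: it is reused verbatim in the proof of Lemma~\ref{Lem2} to bound $\|\varphi_f\|_{2p,\gg}$, so it is worth deriving rather than integrating term by term against $\mu_\infty$ from the outset.

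Your alternative Kipnis--Varadhan route --- matching $k\,\mu_\infty(\varphi_f) = \E_{\mu_\infty}\<M^{f}\>_k = \E_{\mu_\infty}|M_k^{f}|^2$ against the direct computation $2k\,\mu_\infty(fR_f) + O(1)$ and letting $k\to\infty$ --- is also correct; the corrector boundary terms are indeed $O(1)$ in $L^2(\mu_\infty)$ by stationarity together with \eqref{ABC}, and the Fubini step is justified by \eqref{eq5}. It delivers \eqref{D5} but only the integrated identity, not the pointwise expression needed downstream.
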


\begin{proof} Firstly, by Lemma \ref{lem1} and \eqref{ABC}, we have $\mu_\infty(\|\cdot\|_\8^p)<\infty$ for all $p\ge 1$ so that
$\mu_\infty(\varphi_f)<\infty$ for any $f\in C_{p,\gg}(\C)$ with $\mu_\infty(f)=0$.

Next, by the Markov property of $(X_t^\xi)_{t\ge0}$ and noting that  \eqref{WWW} implies
\begin{equation*}
P_tR_f(\xi)=R_f(\xi)-\int_0^tP_sf(\xi)\d s,~ t\ge0,
\end{equation*}
we have
$$ \E\big[f(X_s^\xi) R_f(X_1^\xi)\big] = P_s(fP_{1-s}R_f)(\xi) = P_s (fR_f)(\xi) -\int_0^{1-s} P_s (f P_r f)(\xi)\d r,\ \ s\in [0,1],$$ and
\beg{align*}& \E\bigg(\int_0^1 f(X_s^\xi)\d s\bigg)^2= 2\E\int_0^1 f(X_s^\xi)\d s \int_s^1 f(X_r^\xi)\d r\\
&=2\int_0^1\d s\int_s^{1} P_s(fP_{r-s}f)(\xi) \d r = 2\int_0^1\d s\int_0^{1-s} P_s(fP_{r}f)(\xi) \d r.\end{align*}
Then it follows from \eqref{S7} that
\begin{equation}\label{VPH}
\begin{split}
\varphi_f(\xi)&=R_f( \xi)^2+P_1(R_f)^2 (\xi) + \E\bigg(\int_0^1 f(X_s^\xi)\d s\bigg)^2+ 2 \int_0^1 \E\big[f(X_s^\xi) R_f(X_1^\xi)\big] \d s \\
&\quad - 2 R_f(\xi)\int_0^1 P_rf(\xi)\d r -2 R_f(\xi) P_1 R_f(\xi)\\
&= R_f( \xi)^2+P_1(R_f)^2 (\xi)   + 2\int_0^1\d s\int_0^{1-s}P_s(fP_{r}f)(\xi) \d r + 2\int_0^1 P_s (fR_f)(\xi) \d s \\
&\quad - 2 \int_0^1\d s \int_0^{1-s} P_s(fP_r f)(\xi) \d r- 2 R_f(\xi) \int_0^1 P_r f(\xi) \d r - 2 R_f(\xi)^2 \\
&\qquad\qquad + 2 R_f(\xi) \int_0^1 P_s f(\xi)\d s\\
&= P_1 (R_f^2)(\xi) - R_f(\xi)^2 +   2\int_0^1 P_s (fR_f)(\xi) \d s.
\end{split}
\end{equation}
Since $\mu_\infty$ is $P_t$-invariant,   integrating with respect to
$\mu_\infty(\d\xi)$ on both sides of \eqref{VPH} gives
$\mu_\infty(\varphi_f)= 2\mu_\infty(fR_f)$.

\end{proof}

\begin{lem}\label{Lem2}
 Under      assumptions of Theorem $\ref{th1}$, there exists a constant $C>0$ such that
\begin{equation}\label{W2}  \|\varphi_f\|_{2p,\gg}\le C\,\,\|f\|_{p,\gg}^2,\
\ \ f\in C_{p,\gg}(\C), \mu_\infty(f)=0.
\end{equation}

\end{lem}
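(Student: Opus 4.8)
The plan is to bound each of the three pieces appearing in the final expression \eqref{VPH} for $\varphi_f$, namely $\xi\mapsto R_f(\xi)^2$, $\xi\mapsto P_1(R_f^2)(\xi)$, and $\xi\mapsto \int_0^1 P_s(fR_f)(\xi)\,\d s$, in the $C_{2p,\gg}(\C)$-norm. The two basic inputs are: (i) the pointwise decay estimate \eqref{ABC}, which together with \eqref{WWW} gives $|R_f(\xi)|\le c\,\|f\|_{p,\gg}(1+\|\xi\|_\8^{p/2})$, i.e. an $L^\infty$-type bound relative to the weight $w_{2p}(r):=1+r^{p}$; and (ii) a Lipschitz (Wasserstein-contraction) estimate. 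For the latter, I would first establish that $R_f$ itself is Lipschitz with respect to $\rho_{p,\gg}$: from $|P_tf(\xi)-P_tf(\eta)|\le \|f\|_{p,\gg}\,\mathbb W_{p,\gg}(\dd_\xi P_t,\dd_\eta P_t)\le c\,\e^{-\bb t}\|f\|_{p,\gg}\,\mathbb W_{p,\gg}(\dd_\xi,\dd_\eta)= c\,\e^{-\bb t}\|f\|_{p,\gg}\,\rho_{p,\gg}(\xi,\eta)$, using \eqref{eq6}, and integrating in $t$ one gets $|R_f(\xi)-R_f(\eta)|\le c\,\|f\|_{p,\gg}\,\rho_{p,\gg}(\xi,\eta)$.

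Next I would handle the quadratic term $R_f^2$. Using $|a^2-b^2|=|a-b|\,|a+b|$ with $a=R_f(\xi)$, $b=R_f(\eta)$, combined with the two bounds on $R_f$ just obtained, gives
\[
|R_f(\xi)^2-R_f(\eta)^2|\le c\,\|f\|_{p,\gg}^2\,\rho_{p,\gg}(\xi,\eta)\,(2+\|\xi\|_\8^{p/2}+\|\eta\|_\8^{p/2});
\]
since $\rho_{p,\gg}(\xi,\eta)\le (1\wedge\|\xi-\eta\|_\8^\gg)\ss{1+\|\xi\|_\8^p+\|\eta\|_\8^p}$ and $(2+\|\xi\|_\8^{p/2}+\|\eta\|_\8^{p/2})\ss{1+\|\xi\|_\8^p+\|\eta\|_\8^p}\1 1+\|\xi\|_\8^p+\|\eta\|_\8^p$, the right-hand side is controlled by $c\,\|f\|_{p,\gg}^2\,\rho_{2p,\gg}(\xi,\eta)$. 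Together with $\|R_f^2\|_\infty$ relative to $w_{2p}$ this shows $\|R_f^2\|_{2p,\gg}\le c\,\|f\|_{p,\gg}^2$. The term $P_1(R_f^2)$ is then controlled by the fact that $P_1$ maps $C_{2p,\gg}(\C)$ boundedly into itself: the sup-part follows from \eqref{eq5} (i.e. $\E\|X_1^\xi\|_\8^p\1 1+\|\xi\|_\8^p$), and the Lipschitz part from the Wasserstein contraction \eqref{eq6} with $p$ replaced by $2p$ together with $\mathbb W_{2p,\gg}(\dd_\xi P_1,\dd_\eta P_1)\le c\,\rho_{2p,\gg}(\xi,\eta)$. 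For the integral term $\int_0^1 P_s(fR_f)\,\d s$, note that $fR_f\in C_{2p,\gg}(\C)$ — the product rule for these weighted Lipschitz classes: if $g\in C_{p,\gg}$ with sup-part weight $w_p$ and $h\in C_{p,\gg}$ likewise, then $gh$ has sup-part weight $w_p^2\asymp w_{2p}$ and Lipschitz seminorm against $\rho_{2p,\gg}$, with $\|gh\|_{2p,\gg}\le c\,\|g\|_{p,\gg}\|h\|_{p,\gg}$ — applied to $g=f$, $h=R_f$ and using $\|R_f\|_{p,\gg}\le c\|f\|_{p,\gg}$ (from steps above). Then $\|P_s(fR_f)\|_{2p,\gg}\le c\,\|fR_f\|_{2p,\gg}$ uniformly in $s\in[0,1]$ by the boundedness of $P_s$ on $C_{2p,\gg}(\C)$, and integrating in $s$ preserves the bound.

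The main obstacle is the bookkeeping in the product estimate $\|fh\|_{2p,\gg}\le c\,\|f\|_{p,\gg}\|h\|_{p,\gg}$, in particular verifying that $\rho_{p,\gg}(\xi,\eta)\,(1+\|\xi\|_\8^{p/2}+\|\eta\|_\8^{p/2})\1 \rho_{2p,\gg}(\xi,\eta)$ — this comes down to checking $(1\wedge\|\xi-\eta\|_\8^\gg)\ss{1+\|\xi\|_\8^p+\|\eta\|_\8^p}\,(1+\|\xi\|_\8^{p/2}+\|\eta\|_\8^{p/2})\1 (1\wedge\|\xi-\eta\|_\8^\gg)\ss{1+\|\xi\|_\8^{2p}+\|\eta\|_\8^{2p}}$, which is elementary after squaring. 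The other routine-but-necessary point is confirming that $P_s$ is bounded on $C_{2p,\gg}(\C)$ with a constant uniform for $s\in[0,1]$; the sup-bound uses \eqref{eq5} at $p$ doubled and the Lipschitz bound uses \eqref{eq6} at $p$ doubled together with the trivial coupling estimate $\mathbb W_{2p,\gg}(\dd_\xi,\dd_\eta)\le\rho_{2p,\gg}(\xi,\eta)$. Once these two mechanical facts are in place, \eqref{W2} follows by summing the three contributions.
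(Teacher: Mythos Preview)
Your proposal is correct and follows essentially the same approach as the paper: both start from the representation \eqref{VPH}, derive the pointwise bound on $R_f$ from \eqref{ABC}, the $\rho_{p,\gg}$-Lipschitz bound on $R_f$ by integrating the Wasserstein contraction \eqref{eq6}, and then control $R_f^2$ via $|a^2-b^2|=|a+b|\,|a-b|$ together with the elementary inequality $\rho_{p,\gg}(\xi,\eta)(1+\|\xi\|_\8^{p/2}+\|\eta\|_\8^{p/2})\lesssim\rho_{2p,\gg}(\xi,\eta)$. The paper's proof simply concludes at that point with ``combining \eqref{d3}--\eqref{A3} with \eqref{VPH}, we finish the proof,'' whereas you have spelled out the remaining steps (boundedness of $P_s$ on $C_{2p,\gg}(\C)$ uniformly for $s\in[0,1]$, and the product estimate $\|fR_f\|_{2p,\gg}\le c\,\|f\|_{p,\gg}^2$); these are exactly the details implicitly covered by the paper's closing sentence.
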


\begin{proof}  By \eqref{WWW} and \eqref{ABC}, in addition to $\mu_\infty(\|\cdot\|_\8^p)<\infty$, there exists a constant $c_1>0$ such that
\beq\label{PP1} |R_f(\xi)|\le c_1 \|f\|_{p,\gg} (1+\|\xi\|_\8^p),\ \
f\in C_{p,\gg}(\C), \xi\in \C.\end{equation} Next, applying
\eqref{eq6} to $\mu=\dd_\xi$ and $\nu=\dd_\eta$, we obtain
\begin{equation}\label{d3}
|P_tf(\xi)-P_tf(\eta)|\le c\,\e^{-\bb
t}\|f\|_{p,\gg}\rho_{p,\gg}(\xi,\eta).
\end{equation}
This and \eqref{WWW} imply
\begin{equation}\label{u1}
|R_f(\xi)-R_f(\eta)|\le \int_0^\8|P_tf(\xi)-P_tf(\eta)|\d t\le
\ff{c}{\bb}\|f\|_{p,\gg}\rho_{p,\gg}(\xi,\eta).
\end{equation}
Moreover, it follows from \eqref{PP1} and \eqref{u1} that
\begin{equation}\label{A3}
|R_f(\xi)^2-R_f(\eta)^2|=|R_f(\xi)+R_f(\eta)|\cdot|R_f(\xi)-R_f(\eta)|
 \le c' \,\|f\|_{ p,\gg}^2\,\rho_{2p,\gg}(\xi,\eta)
\end{equation}
for some constant  $c'>0$.  Combining   \eqref{d3}-\eqref{A3} with \eqref{VPH}, we finish the proof. \end{proof}

\begin{lem}\label{lem3}
 Under assumptions of Theorem \ref{th1}, there exist constants
$\dd, c>0$ such that
\begin{equation}\label{Y1}
\E\Big(\sup_{t\in[k,k+1]}\,\e^{\dd\,\|X_t^\xi\|_\8^2}\Big)\le\e^{c\,(1+\|\xi\|_\8^2)},~~k\ge0,~\xi\in\C,~~t\ge0.
\end{equation}
 \end{lem}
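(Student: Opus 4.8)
The plan is to extract from It\^o's formula and the dissipativity {\bf (A2)} a pathwise bound for $\sup_{t\in[k,k+1]}\|X_t^\xi\|_\8^2$ governed by a single martingale, and then to bound the exponential moment of that martingale's contribution, using that $\si$ is bounded ({\bf A3}) --- which makes the relevant martingale increments sub-Gaussian --- and that $\ll_1>\ll_2\e^{\ll_1r_0}$ --- which supplies the decaying weights needed for estimates uniform in $k$.

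Write $\kk:=\ll_1-\vv$ with $\vv$ fixed as in \eqref{eq0}, so that $\ll_\vv=\kk-\ll_2\e^{\kk r_0}>0$, and $K:=\sup_{\eta\in\C}\|\si(\eta)\|<\8$. Repeating the computation \eqref{eq2}, the window estimate $\e^{\kk s}\|X_s^\xi\|_\8^2\le\e^{\kk r_0}\big(\|\xi\|_\8^2\vee\sup_{0\le u\le s}\e^{\kk u}|X^\xi(u)|^2\big)$, and the Gronwall step from the proof of Lemma \ref{lem1}, but \emph{before} passing to expectations, yields, with
\[
M^\xi(t):=2\int_0^t\e^{\kk s}\<\si^*(X_s^\xi)X^\xi(s),\d W(s)\>,\qquad N^\xi(t):=\sup_{0\le s\le t}M^\xi(s),
\]
a constant $c>0$ such that for all $\xi\in\C$ and all $T>0$,
\[
\sup_{t\in[0,T]}\|X_t^\xi\|_\8^2\le c\,(1+\|\xi\|_\8^2)+c\,N^\xi(T)+c\int_0^T\e^{-(\kk-\ll_\vv)s}N^\xi(s)\,\d s ,
\]
and, when $\sup_{t\in[k,k+1]}$ replaces $\sup_{t\in[0,T]}$, the last two terms pick up the additional decaying factors $\e^{-\kk k}$ and $\e^{-\ll_\vv k}$. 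By H\"older's inequality it therefore suffices to produce $\dd>0$ with $\sup_{T>0}\E\exp\big(\dd\,\e^{-\kk T}N^\xi(T)\big)\le\e^{c(1+\|\xi\|_\8^2)}$, and likewise for the integral term.

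For this, note $\<M^\xi\>_t\le 4K^2\int_0^t\e^{2\kk s}|X^\xi(s)|^2\d s\le\tfrac{2K^2}{\kk}\e^{2\kk t}V_t$, where $V_t:=\sup_{s\le t}|X^\xi(s)|^2\le\sup_{s\le t}\|X_s^\xi\|_\8^2$; writing $M^\xi(t)=B(\<M^\xi\>_t)$ for a Brownian motion $B$ and using the monotonicity of the running supremum $B^*$, one gets $\e^{-\kk T}N^\xi(T)\le C\,V_T^{1/2}\,\Xi_T$, where $\Xi_T$ is, by Brownian scaling, conditionally distributed as the running supremum of a standard Brownian motion on $[0,1]$. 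Substituting the pathwise bound of the previous step turns the whole estimate into a self-referential inequality of the type $V\le c_1(1+\|\xi\|_\8^2)+c_1\,V^{1/2}\,\Xi$, which a \emph{stochastic Gronwall} argument --- after first localising by the exit times of large balls so that all stochastic integrals are genuine martingales and all quantities finite --- resolves to $V\le c_2(1+\|\xi\|_\8^2)+c_2\,\Xi^2$, whence $\sup_T\E\,\e^{\dd V}\le\e^{c(1+\|\xi\|_\8^2)}$ for $\dd$ small; removing the localisation via Fatou's lemma and \eqref{eq5} and feeding this back into the pathwise bound gives \eqref{Y1}.

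The main obstacle is exactly this last, self-referential, step: the quadratic variation of $M^\xi$ is controlled by the running norm of $X^\xi$ itself, so no deterministic Gronwall inequality closes the estimate, and one must genuinely use the sub-Gaussianity coming from {\bf (A3)} together with the contraction coming from {\bf (A2)} (which also explains why the final constant can be made independent of $k$). A technically lighter alternative, avoiding the sharpest form of the stochastic Gronwall lemma, is to iterate over time blocks of length $m>r_0$: on a single block one needs only the \emph{finite-horizon} estimate $\E\exp\big(\dd\sup_{s\in[0,m]}|X^\eta(s)|^2\big)\le\e^{c+q\dd\|\eta\|_\8^2}$ with contraction ratio $q=q(m)\in(0,1)$ --- here the finite horizon makes the exponential moments of the stochastic integral automatic from $\|\si\|\le K$ --- and then one compounds the resulting geometric decay of the coefficient of $\|\cdot\|_\8^2$ across blocks using the Markov property.
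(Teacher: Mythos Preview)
Your approach is far more elaborate than what the paper actually does, and it overlooks the two observations that make the paper's proof essentially trivial.

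First, the paper simply \emph{cites} the fixed-time exponential bound
\[
\sup_{t\ge 0}\E\,\e^{\vv_0\|X_t^\xi\|_\8^2}\le\e^{c_0(1+\|\xi\|_\8^2)}
\]
from \cite[Lemma~2.1]{BWY}; there is no need to rederive it. Second --- and this is the key combinatorial point --- it invokes the ``segment trick'' already recorded as \eqref{e1}: because the segment norm is itself a supremum over a window of length $r_0$, one has
\[
\sup_{t\in[k,k+1]}\|X_t^\xi\|_\8\le\max_{0\le i\le\lfloor 1/r_0\rfloor+1}\|X_{k+ir_0}^\xi\|_\8,
\]
so the continuous-time supremum collapses to a finite maximum. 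Then $\E\max_i Y_i\le\sum_i\E\,Y_i$ together with the fixed-time bound finishes the proof in one line.

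Your programme, by contrast, tries to reprove the cited bound from scratch via pathwise estimates and to control the supremum through martingale maximal inequalities. That is a legitimate direction, but as written there is a genuine gap at the Dambis--Dubins--Schwarz step: you claim $\e^{-\kk T}N^\xi(T)\le C\,V_T^{1/2}\,\Xi_T$ with $\Xi_T$ ``conditionally distributed as the running supremum of a standard Brownian motion on $[0,1]$''. But the time change $\<M^\xi\>_T\le C'\e^{2\kk T}V_T$ involves the \emph{random} quantity $V_T$, which is not independent of the DDS Brownian motion $B$; Brownian scaling $B^*(a)\overset{d}{=}\ss a\,B^*(1)$ requires $a$ deterministic (or at least independent of $B$), so you cannot extract a factor $\Xi_T$ with Gaussian tails uniformly this way. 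A rigorous version would have to go through exponential-martingale bounds of the form $\P(N^\xi(T)\ge a,\,\<M^\xi\>_T\le b)\le\e^{-a^2/(2b)}$ and then carefully decouple the dependence on $V_T$; your block-iteration alternative is closer to how one would actually carry this out. Either way, the segment trick \eqref{e1} makes all of this machinery unnecessary here.
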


\begin{proof}
In terms of \cite[Lemma 2.1]{BWY}, there exist constants
$c_0,\vv_0>0$ such that
\begin{equation}\label{E1}
\sup_{t\ge0}\E\e^{\vv_0\|X_t^\xi\|_\8^2}\le
\e^{c_0(1+\|\xi\|_\8^2)},~~~\xi\in\C.
\end{equation}
On the other hand,  \eqref{e1} implies
\begin{equation*}
\begin{split}
\E\Big(\sup_{t\in[k,k+1]}\,\e^{\vv_0\,\|X_t^\xi\|_\8^2}\Big)
&\le\E\Big(
\max_{i\in\{0,1,\cdots,\lfloor1/r_0\rfloor+1\}}\e^{\vv_0\|X_{k+ir_0}^\xi\|_\8^2}\Big)\\
&\le(\lfloor1/r_0\rfloor+2)\max_{i\in\{0,1,\cdots,\lfloor1/r_0\rfloor+1\}}\E\,
\e^{\vv_0\|X_{k+ir_0}^\xi\|_\8^2}.
\end{split}
\end{equation*}
Combining this with \eqref{E1}, we prove  \eqref{Y1}.
\end{proof}

\begin{proof}[Proof of Theorem \ref{th2}] Let $f\in C_{p,\gg}(\C)$ with $\mu_\infty(f)=0$. By Lemmas \ref{lem1} and \ref{lem3}, the results in Lemma \ref{APP} applies to $D=D_f.$ Below we consider
$D_f>0$ and $D_f=0$, respectively.

(a) Let $D_f>0$. By Lemma \ref{APP}(1), for any $\vv, q>0$, there
exists an increasing function $h:\R_+\times\R_+\to\R_+$ such that
\begin{equation}\label{F4}
\begin{split}
& \sup_{z\in\R}\Big|\P\Big(\ss t A_t^f (\xi)   \le
z\Big)-\Phi_{D_f}(z)\Big|\\
&\le
  h_1(\|\xi\|_\8,\|f\|_{p,\gg})t^{-\ff{1}{4}+\vv}
 +D_f^{-4q}\lfloor t\rfloor^{q(1-4\vv)}\E\Big|\ff{1}{\lfloor
t\rfloor}\<M^{f,\xi}\>_{\lfloor t\rfloor}-D_f^2\Big|^{2q},\ \ t\ge 1.
\end{split}
\end{equation}
So, if we can find   an increasing function $\hat h:\R_+\times\R_+
\to\R_+$ such that
\begin{equation}\label{F3}
\E\Big|\ff{1}{\lfloor t\rfloor}\<M^{f,\xi}\>_{\lfloor
t\rfloor}-D_f^2\Big|^{2q} \le \hat h( \|\xi\|_\8,\|f\|_{p,\gg} )
\lfloor t\rfloor^{-q} , ~\xi\in\C,~~t\ge1,
\end{equation} then the desired estimate   in Theorem \ref{th2}(1) follows from \eqref{F3} with   large enough $q>0$, say, $q>\ff 1 {16\vv}.$
 By \eqref{ABC} for $2p$ instead of $p$,
 $$
|P_t\varphi_f(\xi)-D_f^2|  \le  c\,\|\varphi_f\|_{2p,\gg}\,\e^{-\bb
t} (1+\|\xi\|^p_\8)
 $$ holds for some constants $c,\bb>0$. Combining this with \eqref{eq5}, \eqref{D6} and \eqref{W2}, we prove \eqref{F3}.

(b) Let $D_f=0$. With  $q=1$ the estimate  \eqref{F3} reduces to
\begin{equation}\label{F6} \E\Big|\ff{1}{\lfloor
t\rfloor}\<M^{f,\xi}\>_{\lfloor t\rfloor} \Big|^2 \le \hat h (
\|\xi\|_\8,\|f\|_{p,\gg}) \lfloor t\rfloor^{-1} , ~\xi\in\C,~~t\ge1.
\end{equation}
 Combining this with Lemma \ref{APP}(2), we prove Theorem \ref{th2}(2).
\end{proof}

\section{Proof of Theorem \ref{th3}}\label{sec4}

 Let us fix  $  f\in C_{p,\gg}(\C) $ with $\mu_\infty(f)=0$. To apply Lemma \ref{L1}, for any $\xi\in\C$, we consider
$$ M_n^{\xi}:= \sum_{k=0}^n  \big\{f(X_k^\xi)-P_kf(\xi)\big\} +  \sum_{k=n+1}^\8 \big\{P_{k-n} f(X_n^\xi)-P_kf(\xi)\big\},\ \ n\ge 0.
 $$ The argument after \eqref{MTG}  implies that $(M_n^\xi)_{n\ge 0}$ is a  well-defined square integrable martingale.
Let
\begin{equation*}
 ~S_n^{\xi} =\ss{\E\big| M_n^{\xi}\big|^2}\,,~~
  Z_n^{\xi} =  M_n^{\xi}- M_{n-1}^{\xi},\ \ n\ge 1,
\end{equation*}
and let $\hat R_f$ and $\hat\varphi_f$ be given before Theorem
\ref{th3}. Following the arguments of   Lemmas \ref{Lem3} and
  \ref{Lem2}, we have
\begin{equation}\label{w1}
0\le \hat D_f^2:=\mu_\infty(\hat\varphi_f)=2\mu_\infty(f\hat R _f)<\8,
\end{equation}
and for some constant $ c>0,$
\begin{equation}\label{w2}
 \|\hat\varphi_f\|_{2p,\gg}\le  c\,\|f\|_{p,\gg}^2,\ \ f\in C_{p,\gg}(\C).
\end{equation}

\begin{lem}
  Under assumptions of Theorem $\ref{th1}$, $\P$-a.s.
\begin{equation}\label{w4}
\ff{1}{n}\sum_{k=1}^n(Z_k^{\xi})^2\to\hat D_f^2.
\end{equation}

\end{lem}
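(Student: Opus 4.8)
The plan is to deduce the almost-sure convergence \eqref{w4} from the ergodic-type identity \eqref{D6} in its discrete form, namely $\langle M^{f,\xi}\rangle_n = \sum_{k=0}^{n-1}\hat\varphi_f(X_k^\xi)$ (this follows exactly as in the argument giving \eqref{D6}, using the Markov property and the identity $M_k^\xi = M_{k-1}^\xi + f(X_{k-1}^\xi) + \hat R_f(X_k^\xi) - \hat R_f(X_{k-1}^\xi)$ together with $\E[(Z_k^\xi)^2\mid\F_{k-1}] = \hat\varphi_f(X_{k-1}^\xi)$). Thus proving \eqref{w4} amounts to showing the discrete-time SLLN
\[
\ff 1 n \sum_{k=0}^{n-1}\hat\varphi_f(X_k^\xi)\to\mu_\infty(\hat\varphi_f)=\hat D_f^2,\ \ \P\text{-a.s.}
\]
First I would record that $\hat\varphi_f\in C_{2p,\gg}(\C)$ by \eqref{w2}, so $\hat\varphi_f$ is an admissible reference function of the type handled in Theorem \ref{th1}; in particular $\mu_\infty(\hat\varphi_f)<\infty$ by Lemma \ref{lem1}.

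Next I would obtain the continuous-time a.s. rate from Theorem \ref{th1}(2): applied to $\hat\varphi_f$ with exponent $2p$, it gives, $\P$-a.s., $\big|\ff1t\int_0^t\hat\varphi_f(X_s^\xi)\d s-\hat D_f^2\big|\le c_\vv t^{-1/2+\vv}$ for $t\ge T_\vv^{\hat\varphi_f}(\xi)$. To pass from the continuous-time average to the discrete sum $\ff1n\sum_{k=0}^{n-1}\hat\varphi_f(X_k^\xi)$ I would compare $\int_k^{k+1}\hat\varphi_f(X_s^\xi)\d s$ with $\hat\varphi_f(X_k^\xi)$. Using the continuity of $\hat\varphi_f$ and the pathwise modulus-of-continuity control $\sup_{s\in[k,k+1]}\|X_s^\xi\|_\8\le \max_{0\le i\le\lfloor1/r_0\rfloor+1}\|X_{k+ir_0}^\xi\|_\8$ from \eqref{e1}, together with the moment bound \eqref{eq5} and a Borel--Cantelli argument exactly as in the proof of Theorem \ref{th1}(2), one gets $\sup_{s\in[k,k+1]}\|X_s^\xi\|_\8\le k^{\varepsilon'}$ eventually a.s., hence $|\hat\varphi_f(X_s^\xi)-\hat\varphi_f(X_k^\xi)|$ on $[k,k+1]$ is controlled by $\|\hat\varphi_f\|_{2p,\gg}$ times $(1\wedge\|X_s^\xi-X_k^\xi\|_\8^\gg)(1+\|X_s^\xi\|_\8^p+\|X_k^\xi\|_\8^p)$, which tends to $0$ and is summably small after division by $n$. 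Alternatively, and more simply, one can avoid the oscillation estimate entirely: apply the discrete-time SLLN machinery directly. Indeed, the cleanest route is to note that $(X_n^\xi)_{n\ge0}$ is itself a Markov chain on $\C$ with transition kernel $P_1$, to which the discrete analogues of Lemmas \ref{A00} and \ref{s0} apply verbatim (the hypotheses \eqref{AA}, \eqref{AA1}, \eqref{AA4}, \eqref{AA3} hold for $P_1$ with $\varphi(n)=c\e^{-\bb n}$ summable, by \eqref{ABC} and \eqref{eq5}); this yields directly $\ff1n\sum_{k=0}^{n-1}\hat\varphi_f(X_k^\xi)\to\mu_\infty(\hat\varphi_f)$ a.s.

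I would structure the write-up as: (i) establish $\langle M^{f,\xi}\rangle_n=\sum_{k=0}^{n-1}\hat\varphi_f(X_k^\xi)$; (ii) verify $\hat\varphi_f\in C_{2p,\gg}(\C)$ via \eqref{w2}; (iii) invoke the discrete-time SLLN (Theorem \ref{th1}(2) applied to the skeleton chain, or its continuous-time version combined with the oscillation control above) to conclude $\ff1n\langle M^{f,\xi}\rangle_n\to\hat D_f^2$ a.s.; (iv) since $\ff1n\sum_{k=1}^n(Z_k^\xi)^2$ differs from $\ff1n\langle M^{f,\xi}\rangle_n$ only through the martingale-difference sequence $(Z_k^\xi)^2-\E[(Z_k^\xi)^2\mid\F_{k-1}]$, apply a strong law for martingale differences — e.g. the Chow--Birnbaum--Marshall / Kolmogorov-type criterion — which requires a uniform-in-$k$ bound $\sup_k\E|(Z_k^\xi)^2|^{1+\eta}<\infty$ for some $\eta>0$; this last bound follows from the exponential-moment estimate \eqref{Y1} (Lemma \ref{lem3}), since $Z_k^\xi = f(X_{k-1}^\xi)+\hat R_f(X_k^\xi)-\hat R_f(X_{k-1}^\xi)$ and both $|f|$ and $|\hat R_f|$ grow at most polynomially in $\|\cdot\|_\8$ by \eqref{PP1}. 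The main obstacle I anticipate is step (iv): controlling the difference between the quadratic variation $\langle M^{f,\xi}\rangle_n$ and the realized sum of squares $\sum_{k=1}^n(Z_k^\xi)^2$, i.e. proving the strong law $\ff1n\sum_{k=1}^n\big((Z_k^\xi)^2-\E[(Z_k^\xi)^2\mid\F_{k-1}]\big)\to0$ a.s.; everything else is a routine transcription of the arguments already developed for Theorems \ref{th1} and \ref{th2}, whereas this step genuinely needs the uniform higher-moment bound on $Z_k^\xi$ extracted from Lemma \ref{lem3}.
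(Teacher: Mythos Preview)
Your proposal is essentially correct but follows a genuinely different route from the paper's proof. You decompose $\frac{1}{n}\sum_{k=1}^n (Z_k^\xi)^2$ into the predictable quadratic variation $\frac{1}{n}\sum_{k=0}^{n-1}\hat\varphi_f(X_k^\xi)$ plus a martingale-difference correction, and handle the two pieces separately via a discrete SLLN for $\hat\varphi_f\in C_{2p,\gg}(\C)$ and a Kolmogorov-type martingale SLLN using the uniform fourth-moment bound $\sup_n\E|Z_n^\xi|^4<\infty$. The paper instead invokes \cite[Lemma~3.2]{BM}: it reduces the claim to showing that the maps $\xi\mapsto\E\big(|\limsup_n \frac{1}{n}\sum_k(Z_k^\xi)^2-\hat D_f^2|\wedge1\big)$ and its $\liminf$ counterpart are continuous in $\xi$; Birkhoff's theorem then settles the stationary case and continuity propagates the result to every initial point. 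The technical work in the paper is the continuity estimate, carried out via \eqref{DD1} and the Wasserstein contraction \eqref{eq6}. Your approach is more self-contained (no external reference to \cite{BM}) and reuses the SLLN machinery already built for Theorem~\ref{th1}; the paper's approach avoids having to set up discrete-time versions of Lemmas~\ref{A00}--\ref{s0}.

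One caveat: your option (a) in step (iii), comparing $\int_k^{k+1}\hat\varphi_f(X_s^\xi)\,\d s$ with $\hat\varphi_f(X_k^\xi)$, does not work as written. The factor $1\wedge\|X_s^\xi-X_k^\xi\|_\infty^\gg$ has no reason to vanish as $k\to\infty$---the process does not become locally constant---so the oscillation terms are not ``summably small after division by $n$'' without a further ergodic argument that is as hard as the goal itself. Your option (b), running the discrete-time analogues of Lemmas~\ref{A00} and \ref{s0} for the skeleton chain $(X_n^\xi)_{n\ge0}$ with kernel $P_1$, is the correct way to complete step (iii); the hypotheses transfer directly from \eqref{ABC} and \eqref{eq5}, and this is what you should write up.
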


\begin{proof} According to  the proof of \cite[Lemma 3.2]{BM},
it suffices to show that the maps
\begin{align*}
&\C\ni \xi\mapsto \Lambda_1(\xi):=\E\Big(\Big|\limsup_{n\to\8}\Big(\ff{1}{n}\sum_{k=1}^n(Z_{k}^{\xi})^2\Big)-\hat D_f^2\Big|\wedge1\Big)\\
&\C\ni\xi\mapsto
\Lambda_2(\xi):=\E\Big(\Big|\liminf_{n\to\8}\Big(\ff{1}{n}\sum_{k=1}^n(Z_{k}^{\xi})^2\Big)-\hat
D_f^2\Big|\wedge1\Big)
\end{align*}are continuous. For simplicity, we only prove the continuity of $ \Lambda_1$ as that of the other is completely similar.
By   definition it is easy to see that
\beq\label{DD0} Z_n^\xi =f(X_n^\xi)+\sum_{k=n}^\infty \big\{P_{k-n} f(X_n^\xi)- P_{k+1-n}f(X_{n-1}^\xi)\big\},\ \ n\ge 1.\end{equation}
Combining this with \eqref{ABC}, we find  constants $c_1,c_2>0$ such that
\beq\label{DD1} \beg{split} &\big|Z_n^\xi-Z_n^\eta\big| \le \big|f(X_n^\xi)- f(X_n^\eta)\big|\\
&\quad  +\sum_{k=n}^\infty \big(|P_{k-n} f(X_n^\xi)- P_{k-n} f(X_n^\eta)| + |P_{k+1-n} f(X_{n-1}^\xi)- P_{k+1-n} f(X_{n-1}^\eta)|\big)\\
&\le c_1 \rr_{p,\gg}(X_n^\xi, X_n^\eta) + c_1 \sum_{k=n}^\infty \e^{-\bb (k-n)} \big\{\rr_{p,\gg}(X_n^\xi,X_n^\eta)+  \rr_{p,\gg}(X_{n-1}^\xi,X_{n-1}^\eta)\big\}\\
&\le c_2 \big\{\rr_{p,\gg}(X_n^\xi,X_n^\eta)+
\rr_{p,\gg}(X_{n-1}^\xi,X_{n-1}^\eta)\big\}.\end{split}\end{equation}
Similarly, \eqref{ABC} with $\mu_\infty(f)=0$ and \eqref{DD0}  also
imply
$$|Z_n^\xi|\le c_3 (1+ \|X_n^\xi\|_\8+  \|X_{n-1}^\xi\|_\8)^{p/2},\ \ n\ge 1, \xi\in \C$$
for some constant $c_3>0$. Combining this with \eqref{DD1} and
setting
$$A_k^{\xi,\eta}:= 1+\|X_k^\xi\|_\8+ \|X_k^\eta\|_\8+\|X_{k-1}^\xi\|_\8+ \|X_{k-1}^\eta\|_\8,\ \ k\ge 1,$$   we may find a constant $c_4>0$ such that
\beq\label{PP0} \beg{split} & |\LL_1(\xi)-\LL_1(\eta)|\\
&\le  \bigg| \E \Big|\lim_{l\to\infty}\sup_{n\ge l}  \ff 1 n \sum_{k=1}^n \big(|Z_k^\xi|^2 -\hat D_f^2\big)\Big|   - \E \Big|\lim_{l\to\infty}\sup_{n\ge l} \ff 1 n \sum_{k=1}^n \big(|Z_k^\eta|^2 -\hat D_f^2\big)\Big| \bigg| \\
&\le \E\bigg[\lim_{l\to\infty} \sup_{n\ge l} \ff 1 n \sum_{k=1}^n \big|Z_k^\xi-Z_k^\eta\big|  \big(|Z_k^\xi|+|Z_k^\eta|\big)\bigg]\\
&\le c_4 \E\bigg[\lim_{l\to\infty} \sup_{n\ge l} \ff 1 n \sum_{k=1}^n\big\{\rr_{p,\gg}(X_k^\xi,X_k^\eta)+ \rr_{p,\gg}(X_{k-1}^\xi,X_{k-1}^\eta)\big\}
 |A_k^{\xi,\eta}|^{\ff p 2}\bigg].\end{split}\end{equation}
Since   $\rr_{p,\gg}(\xi,\eta)\le (1+\|\xi\|_\8+\|\eta\|_\8)^{\ff p2}$ for all $\xi,\eta\in\C$, for any $m\ge 1$ and $  l\ge m$ we have
\beg{align*}&  \sup_{n\ge l} \ff 1 n \sum_{k=1}^n\big\{\rr_{p,\gg}(X_k^\xi,X_k^\eta)+ \rr_{p,\gg}(X_{k-1}^\xi,X_{k-1}^\eta)\big\} |A_k^{\xi,\eta}|^{\ff p 2}\\
&\le \ff 1 l \sum_{k=1}^m  \big\{\rr_{p,\gg}(X_k^\xi,X_k^\eta)+ \rr_{p,\gg}(X_{k-1}^\xi,X_{k-1}^\eta)\big\} |A_k^{\xi,\eta}|^{\ff p 2} \\
&\quad   + \sum_{k=m}^\infty|A_k^{\xi,\eta}|^{\ff {3p}4} \ss{ \rr_{p,\gg}(X_k^\xi,X_k^\eta)+ \rr_{p,\gg}(X_{k-1}^\xi,X_{k-1}^\eta) }.\end{align*}
  Combining this with \eqref{PP0},  \eqref{eq5}, \eqref{eq6}, and applying the Schwarz inequality, we may find constants $c_5,c_6>0$ such that
\beg{align*} &\limsup_{\eta\to\xi} |\LL_1(\xi)-\LL_1(\eta)| \le c_4 \sum_{k=m}^\infty\E\bigg[|A_k^{\xi,\eta}|^{\ff {3p}4} \ss{ \rr_{p,\gg}(X_k^\xi,X_k^\eta)+ \rr_{p,\gg}(X_{k-1}^\xi,X_{k-1}^\eta) }\bigg]\\
&\le c_5 \sum_{k=m}^\infty\big(\E\{\rr_{p,\gg}(X_k^\xi,X_k^\eta)+ \rr_{p,\gg}(X_{k-1}^\xi,X_{k-1}^\eta)\}\big)^{\ff 1 2} \big(\E|Z_k^{\xi,\eta}|^{\ff {3 p}2}\big)^{\ff 1 2} \\
&\le c_6 (1+\|\xi\|_\8+\|\eta\|_\8)^{\ff {3p}4} \sum_{k=m}^\infty
\e^{-\bb k/2},\ \ k\ge 1.\end{align*} Letting $k\to\infty$, we
consequently prove $\limsup_{\eta\to\xi}
|\LL_1(\xi)-\LL_1(\eta)|=0.$\end{proof}

\begin{proof}[{\bf Proof of Theorem \ref{th3}}]  Let  $f\in
C_{p,\gg}(\C) $   with $\mu_\infty(f)=0$ and $\hat D_f>0$,   and let
$\xi\in \C$. Below we prove assertions (1) and (2), respectively.

(1)   By Lemma \ref{L1}, for the first assertion we only need to verify conditions \eqref{J1} and \eqref{J2} for $(S_n,Z_n)= (S_n^\xi, Z_n^\xi)$.

Firstly, by \eqref{ABC} and \eqref{w2}, there exist constants $c=c(f,\xi)$ and $\bb>0$ such that
\begin{equation}\label{w3}|P_k\hat \varphi_f(\xi)-\hat D_f^2| =
|P_k\hat \varphi_f(\xi)-\mu_\infty(\hat\varphi_f)|  \le c\,\e^{-\bb
k},\ \ k\ge 0.
\end{equation}
Consequently,
\begin{equation}\label{W0}
\lim_{n\to\8}\ff{(S_n^{\xi})^2}{n}=\lim_{n\to\8}\ff{1}{n}\sum_{k=0}^{n-1}P_k\hat\varphi_f(\xi)=\hat D^2_f>0,
\end{equation}
so that  $S_n^{\xi}\to\8$ as $n\to\8.$
 Next, by following the argument to derive \eqref{u1}, there exists a constant $c_1=c_1(f)>0$ such that
 $$|\hat R_f(\xi_1)- \hat R_f(\xi_2)|\le c_1\rr_{p,\gg}(\xi_1,\xi_2),\ \ \xi_1,\xi_2\in\C.$$
 Combining this with \eqref{eq5}, we may find constants $c_2=c_2(f),c_3=c_3(f,\xi)>0$ such that
  \begin{equation*}
\begin{split}
\E|Z_n^{\xi}|^4&\le8\,  \E|f(X_{n-1}^\xi)|^4  +8\,\E|\hat R_f(X_n^\xi)-\hat R_f(X_{n-1}^\xi)|^4\\
&\le  c_2\Big\{1+ \E \|X_n^\xi\|_\8^{2p}+\E\|X_{n-1}^\xi\|_\8^{2p} \Big\}\\
&\le c_3,\ \ n\ge 1.
\end{split}
\end{equation*}  This together with \eqref{W0} yields
\begin{equation}\label{Y22}
\sum_{n=1}^\8(S_n^{\xi})^{-4}\E\Big((Z_n^{\xi})^4{\bf1}_{\{|Z_n^{\xi}|<
S_n^{\xi}\}}\Big)\le\sum_{n=1}^\8(S_n^{\xi})^{-4}\E(Z_n^{\xi})^4<\infty.
\end{equation}
Combining this with  Chebyshev's
inequality, we obtain
\begin{equation}\label{Y3}
\sum_{n=1}^\8(S_n^{\xi})^{-1}\E\Big(|Z_n^{\xi}|{\bf1}_{\{|Z_n^{\xi}|\ge
S_n^{\xi}\}}\Big)\le
\sum_{n=1}^\8(S_n^{\xi})^{-4}\E(Z_n^{\xi})^4 <\8.
\end{equation}
Therefore, \eqref{J1} holds true for $(S_n,Z_n)= (S_n^{\xi},Z_n^\xi).$

On the other hand,     \eqref{w4} and \eqref{W0} imply $\P$-a.s.
\begin{equation}\label{Y33}
\lim_{n\to\infty} \ff{1}{(S_n^{\xi})^2}\sum_{k=1}^n(Z_k^{\xi})^2=\lim_{n\to\infty} \ff{n}{(S_n^{\xi})^2}\bigg(\ff{1}{n}\sum_{k=1}^n(Z_k^{\xi})^2\bigg)=1.
~~\P\mbox{-a.s.}
\end{equation} So,  \eqref{J2} holds for  $(S_n,Z_n)= (S_n^{\xi},Z_n^\xi)$ as well, and hence the assertion in (1) follows from Lemma \ref{L1}.

(2) It remains to prove \eqref{B2}. By the first assertion,
$\Lambda_n^{f,\xi}(t)$    is almost surely relatively compact in
$C([0,1];\R)$ and the set of its limits points coincides with
$\mathcal {H}$. Since  $\|h\|_\mathcal {H}\le 1$ for any
$h\in\mathcal {H}$, this implies $\P$-a.s.
\begin{equation}\label{B3}
\limsup_{n\to\8}\sup_{t\in[0,1]}|\Lambda_n^{f,\xi}(t)|\le1.
\end{equation}
Observing that \eqref{B1} implies \beq\label{*WP}\LL_n^{f,\xi}(1)=
\ff{\sum_{l=1}^{n-1} f(X_l^\xi)}{\hat D_f \ss{2n\log\log n}},\ \
n\ge 1,\end{equation} it follows from \eqref{B3} that
\begin{equation}\label{B4}
\limsup_{n\to\8}\ff{ \sum_{l=1}^nf(X_l^\xi)  }{\ss{2n\log\log
n}}= \hat D_f \limsup_{n\to\infty} \LL_n^{f,\xi}(1)\le\hat D_f,~~~\P\mbox{-a.s.}
\end{equation}
On the other hand, since the limits points of
$(\Lambda_n^{f,\xi}(t))$ coincides with $\mathcal {H}$ and
$h\in\mathcal {H}$ with $h(t)=t,t\in[0,1]$, there exists a
subsequence $n_k\uparrow\8$ as $k\to\8$ such that $\P$-a.s.
\begin{equation*}
\lim_{k\to\infty} \sup_{t\in[0,1]}|\Lambda_{n_k}^{f,\xi}(t)-h(t)|= 0.
\end{equation*} In particular,
combining this with \eqref{B1} for    $k=n-1$ and $t=\ff{k}{n}$, we
deduce    $\P$-a.s.
$$
\lim_{k\to\8}\ff{\sum_{l=1}^{n_k}f(X_l^\xi) }{\ss{2n_k\log\log
n_k}}=\lim_{k\to\8} \hat D_f   \LL_{n_k}^{f,\xi}(1)=\hat D_f,
$$
which together with  \eqref{B4}   yields
\begin{equation*}
\limsup_{n\to\8}\ff{\sum_{l=1}^nf(X_l^\xi)}{\ss{2n\log\log
n}}=\hat D_f ~~~\P\mbox{-a.s.}
\end{equation*}
Replacing $f$ by $-f$, this formula reduces to
$$
\liminf_{n\to\8}\ff{\sum_{l=1}^nf(X_l^\xi)}{\ss{2n\log\log
n}}=-\hat D_f ~~~\P\mbox{-a.s.}
$$
   Therefore, \eqref{B2} holds.
\end{proof}


\end{document}